\date{}
\newcommand{\R}{{\mathbb R}}
\newcommand{\N}{{\mathbb N}}
\newcommand{\D}{{\mathcal D}}
\newcommand{\F}{{\mathcal F}}
\newcommand{\const}{{\rm const}}
\newcommand{\supp}{\mathop{\rm supp}}
\renewcommand{\div}{\mathop{\rm div}\nolimits}
\newcommand{\codim}{\mathop{\rm codim}}
\renewcommand{\Im}{\mathop{\rm Im}}
\numberwithin{equation}{section}
\theoremstyle{plain}
\newtheorem{theorem}{Theorem}[section]
\newtheorem{lemma}{Lemma}[section]
\newtheorem{proposition}{Proposition}[section]
\newtheorem{corollary}{Corollary}[section]
\theoremstyle{definition}
\newtheorem{definition}{Definition}[section]
\newtheorem{remark}{Remark}[section]
\begin{document}
\title{On linear evolutionary equations with skew symmetric spatial operators}
\author{Evgeny Yu. Panov \\ St. Petersburg Department of V.\,A.~Steklov Institute \\ of Mathematics of the Russian Academy of Sciences, \\
St. Petersburg, Russia, \\ Yaroslav-the-Wise Novgorod State University,\\ Veliky Novgorod, Russia}
\maketitle

\begin{abstract}
We study generalized solutions of an evolutionary equation related to a densely defined skew-symmetric operator in a real Hilbert space. We establish existence of a contractive semigroup, which provides generalized solutions, and find criteria of uniqueness of generalized solutions. Some applications are given including the transport equations and the linearised Euler equations with  solenoidal (and generally discontinuous) coefficients. Under some additional regularity assumption on the coefficients we prove that the corresponding spatial operators are skew-adjoint, which implies existence and uniqueness of generalized solutions for both the forward and the backward Cauchy problem.
\end{abstract}

\section{Introduction}

Let $H$ be a real Hilbert space, $A_0$ be a skew-symmetric linear operator in $H$ with a dense domain $X_0=D(A_0)\subset H$. Skew-symmetricity of the operator $A_0$ means that $(A_0u,v)=-(u,A_0v)$ $\forall u,v\in X_0$, where $(\cdot,\cdot)$ denotes the scalar multiplication in $H$. The skew-symmetricity condition is equivalent to the relation
$-A_0\subset (A_0)^*$, where $(A_0)^*$ is the adjoint operator. Since an adjoint operator is closed, we conclude that there exists the closure $A$ of the operator $A_0$. This operator $A$ is a closed skew-symmetric operator, and $A^*=(A_0)^*$.
We underline that in view of the identity $2((A_0u,v)+(u,A_0v))=(A_0(u+v),u+v)-(A_0(u-v),u-v)$ the skew-symmetricity of $A_0$ is equivalent to the condition $(A_0u,u)=0$ $\forall u\in X_0$.

The operator $A^*$ is an extension of the operator $-A$, and may be not skew-symmetric. Its skew-symmericity holds only in the case $A^*=-A$, that is, when the operator $A$ is skew-adjoint.

We consider the evolutionary equation
\begin{equation}\label{e1}
u'-A^*u =0, \quad u=u(t), \ t\in\R_+,
\end{equation}
where $\R_+=[0,+\infty)$, with the initial condition
\begin{equation}\label{c1}
u(0)=u_0\in H.
\end{equation}

\begin{definition}\label{def1}
A function $u=u(t)\in L^\infty_{loc}(\R_+,H)$ is called a generalized solutions (g.s.) of problem (\ref{e1}), (\ref{c1}) if
$\forall f(t)\in C_0^1(\R_+,X_0)$, where the space $X_0$ equipped with the graph norm $\|u\|^2=\|u\|_H^2+\|A_0u\|_H^2$,
\begin{equation}\label{gs}
\int_{\R_+}(u(t),f'(t)+A_0f(t))dt+(u_0,f(0))=0.
\end{equation}
\end{definition}

\begin{remark}\label{rem1}
(1) It follows from relation (\ref{gs}) with $f(t)=v\varphi(t)$, where $\varphi(t)\in C_0^1((0,+\infty))$, $v\in X_0$, that
$\frac{d}{dt} (u(t),v)=(u(t),Av)$ in $\D'((0,+\infty))$, where we denote by $\D'(I)$ the space of distributions on an open set $I$. The latter relation directly implies weak continuity of $u(t)$ (after possible correction on a set of null measure). As is easy to see, relation (\ref{gs}) also implies initial condition
(\ref{c1}) understood in the sense of weak convergence $u(t)\rightharpoonup u_0$ as $t\to 0+$;

(2) Let $X=D(A)$ be the domain of operator $A$, equipped with the graph norm $\|u\|^2=\|u\|_H^2+\|A(u)\|_H^2$, then $X$ is a Banach space in view of closedness of $A$ (it is even a Hilbert space with scalar multiplication $(u,v)+(Au,Av)$). Since the operator $A$ is the closure of $A_0$, the space $X_0$ is dense both in $H$ and in $X$, which implies that the space $C_0^1(\R_+,X_0)$ is dense in $C_0^1(\R_+,H)\cap C_0(\R_+,X)$. Therefore, every test function $f(t)\in C_0^1(\R_+,H)\cap C_0(\R_+,X)$ can be approximated by a sequence
$f_r(t)\in C_0^1(\R_+,X_0)$, $r\in\N$, so that $f_r(t)\to f(t)$, $f_r'(t)\to f'(t)$, $A_0f_r(t)\to Af(t)$ as $r\to\infty$ in $H$, uniformly with respect to $t$. Passing to the limit as $r\to\infty$ in relations (\ref{gs}) with the test functions $f=f_r$, we arrive at the relation
\[
\int_{\R_+}(u(t),f'(t)+Af(t))dt+(u_0,f(0))=0
\]
for all $f(t)\in C_0^1(\R_+,H)\cap C_0(\R_+,X)$. In particular, since $C_0^1(\R_+,H)\cap C_0(\R_+,X)\supset C_0^1(\R_+,X)$, we can replace $A_0,X_0$ in Definition~\ref{def1} by  $A$ and $X$, respectively. Thus, we may initially suppose that $A_0=A$ is a closed skew-symmetric operator.
\end{remark}

\section{Contractive semigroups of g.s.}
We consider a $C_0$-semigroup $T_t=e^{tB}$ of bounded linear operators in $H$ with an infinitesimal generator $B$. It is known that $B$ is a densely defined closed operator.
We are interesting when this semigroup provides g.s. of the problem  (\ref{e1}), (\ref{c1}). The following theorem gives the answer.

\begin{theorem}\label{th1}
Functions $u(t)=e^{tB}u_0$ are g.s. of the problem (\ref{e1}), (\ref{c1}) for every $u_0\in H$ if and only if $B\subset A^*$.
\end{theorem}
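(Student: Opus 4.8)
The plan is to prove both implications by relating the integral identity \eqref{gs} (in the form with $A$, $X$ from Remark~\ref{rem1}) to the action of the semigroup on smooth-in-time test functions. For the ``if'' direction, assume $B\subset A^*$ and fix $u_0\in H$, $f(t)\in C_0^1(\R_+,X)$. First I would handle the dense case $u_0\in D(B)$: then $u(t)=e^{tB}u_0$ is a classical solution, $u\in C^1(\R_+,H)$ with $u'(t)=Bu(t)=A^*u(t)$, and since $A\subset -A^*=(A^*)^*$... more precisely, for $v\in X=D(A)$ we have $(A^*u(t),v)=(u(t),-Av)$ because $u(t)\in D(B)\subset D(A^*)$ and $A$ is skew-symmetric so $\langle A^*w,v\rangle=-\langle w,Av\rangle$ whenever $w\in D(A^*)$, $v\in D(A)$. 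Hence $\frac{d}{dt}(u(t),f(t))=(u'(t),f(t))+(u(t),f'(t))=(A^*u(t),f(t))+(u(t),f'(t))=(u(t),-Af(t))+(u(t),f'(t))$. Integrating over $\R_+$ and using that $f$ has compact support so the boundary term at $+\infty$ vanishes while at $0$ it gives $-(u(0),f(0))=-(u_0,f(0))$, we obtain exactly \eqref{gs}. Then for general $u_0\in H$, approximate by $u_0^n\to u_0$ with $u_0^n\in D(B)$; since $\|e^{tB}\|$ is locally bounded, $e^{tB}u_0^n\to e^{tB}u_0$ in $L^\infty_{loc}(\R_+,H)$, and passing to the limit in \eqref{gs} (which is linear and continuous in $u_0$ and in $u(\cdot)$) preserves the identity.

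For the ``only if'' direction, suppose $u(t)=e^{tB}u_0$ is a g.s. for every $u_0\in H$. I must show $D(B)\subset D(A^*)$ and $Bw=A^*w$ there. Take $u_0=w\in D(B)$, so $u(t)=e^{tB}w$ is $C^1$ with $u'(t)=Bu(t)$. Plugging $f(t)=v\varphi(t)$ with $v\in X_0=D(A)$, $\varphi\in C_0^1((0,+\infty))$ into \eqref{gs} and integrating by parts in $t$ (the boundary terms drop since $\varphi(0)=0$) gives $\int(u'(t)-(\text{something}),v)\varphi = 0$; concretely \eqref{gs} reads $\int_{\R_+}(u(t),v)\varphi'(t)\,dt+\int_{\R_+}(u(t),Av)\varphi(t)\,dt=0$, and integrating the first term by parts yields $\int_{\R_+}\big[-(u'(t),v)+(u(t),Av)\big]\varphi(t)\,dt=0$ for all such $\varphi$, hence $(u'(t),v)=(u(t),Av)$ for a.e. $t$, and by continuity for all $t>0$; letting $t\to 0+$ and using $u(t)\to w$, $u'(t)\to Bw$ (continuity of $e^{tB}w$ and of its derivative on $D(B)$) we get $(Bw,v)=(w,Av)$ for all $v\in D(A)$. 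By definition of the adjoint this says precisely $w\in D(A^*)$ and $A^*w=Bw$. Therefore $B\subset A^*$.

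The main obstacle I anticipate is the ``only if'' direction's passage to the limit as $t\to 0+$: one needs $u'(t)=Bu(t)\to Bw$, which is valid because on $D(B)$ the orbit $t\mapsto e^{tB}w$ is continuously differentiable with derivative $Be^{tB}w=e^{tB}Bw\to Bw$; this is standard $C_0$-semigroup theory but should be invoked explicitly. A secondary technical point is justifying the approximation argument in the ``if'' direction — one should note that \eqref{gs} is continuous with respect to $u(\cdot)$ in $L^1_{loc}(\R_+,H)$ (since $f'+Af$ is a fixed element of $C_0(\R_+,H)$ with compact support, the integral is a continuous linear functional), so $L^\infty_{loc}$-convergence, a fortiori $L^1_{loc}$-convergence on the compact support of $f$, suffices. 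Everything else is routine integration by parts and the elementary duality between $A$ and $A^*$.
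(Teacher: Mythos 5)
Your overall strategy coincides with the paper's: for the ``if'' direction, treat $u_0\in D(B)$ first by differentiating $t\mapsto(u(t),f(t))$ and then pass to general $u_0$ by density and local boundedness of $\|e^{tB}\|$; for the ``only if'' direction, test with $f=v\varphi$ and identify $(Bu_0,v)$ with $(u_0,A_0v)$ at $t=0+$. The ``only if'' half is correct and essentially identical to the paper's (the paper uses $h\in C_0^1(\R_+)$ with $h(0)$ arbitrary and reads off $I'(0+)$, whereas you take $\varphi$ vanishing at $0$ and let $t\to0+$; both work, and your remark that one needs $u'(t)=e^{tB}Bu_0\to Bu_0$ is exactly the right point to make explicit).

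The ``if'' direction, however, contains a genuine sign error resting on a false identity. You assert that $(A^*w,v)=-(w,Av)$ for all $w\in D(A^*)$, $v\in D(A)$, ``because $A$ is skew-symmetric.'' This is not what skew-symmetry gives. By the very definition of the adjoint $A^*=(A_0)^*$ one has $(A^*w,v)=(w,Av)$ for every $w\in D(A^*)$, $v\in D(A)$, with a plus sign; skew-symmetry only says that this reduces to $-(Aw,v)$ when $w$ itself lies in $D(A)$. Were your identity true for all $w\in D(A^*)$, comparison with the defining identity would force $(w,Av)=0$ for all $w$ in the dense set $D(A^*)$, i.e. $A=0$. (Your parenthetical ``$-A^*=(A^*)^*$'' likewise asserts $A^*=-A$, i.e. skew-adjointness of $A$, which is precisely what the theorem must not assume.) As a consequence your computation yields $\frac{d}{dt}(u(t),f(t))=(u(t),f'(t)-Af(t))$ and, after integration, $\int_{\R_+}(u,f'-Af)\,dt+(u_0,f(0))=0$, which is not identity (\ref{gs}): the latter has $f'+A_0f$. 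The repair is immediate --- use $(A^*u(t),f(t))=(u(t),Af(t))$ directly from the definition of the adjoint, which gives $\frac{d}{dt}(u,f)=(u,f'+Af)$ and hence (\ref{gs}) --- and with this correction your argument becomes the paper's proof. The remaining technical points you flag (continuity of (\ref{gs}) with respect to $u$ in $L^1_{loc}$, the density argument for general $u_0$) are handled correctly.
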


\begin{proof}
First, we assume that $B\subset A^*$ and that $u_0\in D(B)$. Then the function $u(t)=e^{tB}u_0\in C(\R_+,D(B))\cap C^1(\R_+,H)$ and $u'(t)=T_tBu_0=BT_tu_0=Bu(t)$. Therefore, for each $f(t)\in C_0^1(\R_+,X_0)$ there exists a continuous derivative
\begin{equation}\label{sk1}
\frac{d}{dt}(u(t),f(t))=(u'(t),f(t))+(u(t),f'(t))=(Bu(t),f(t))+(u(t),f'(t)).
\end{equation}
By our assumption $B\subset A^*$ and therefore
\[
(Bu(t),f(t))=(A^*u(t),f(t))=(u(t),Af(t))=(u(t),A_0f(t)).
\]
Now it follows from (\ref{sk1}) that
\[
\frac{d}{dt}(u(t),f(t))=(u(t),A_0f(t)+f'(t)).
\]
After integration this relation implies
\[
\int_{\R_+}(u(t),f'(t)+A_0f(t))dt=\int_{\R_+}\frac{d}{dt}(u(t),f(t))dt=-(u_0,f(0)).
\]
Hence, identity (\ref{gs}) holds, and $u(t)$ is a g.s. of problem \ref{e1}), (\ref{c1}).

In the case of arbitrary $u_0\in H$ we consider a sequence
$u_{0k}\in D(B)$, which converges to $u_0$ in $H$ as $k\to\infty$. As was already established, the functions $u_k(t)=T_tu_{0k}$ are g.s. of problem (\ref{e1}), (\ref{c1}) with initial data $u_{0k}$ for all $k\in\N$, and
\[
\|u_k(t)-u(t)\|_H\le\|T_t\|\|u_{0k}-u_0\|_H\mathop{\to}_{k\to\infty} 0
\]
uniformly on any segment $[0,T]$, where we are taking into account the known for $C_0$-semigroups bound $\|T_t\|\le Ce^{\alpha t}$ with some constants $C>0$, $\alpha\in\R$.
Passing to the limit as $k\to\infty$ in the identity (\ref{gs})
\[
\int_{\R_+}(u_k(t),f'(t)+A_0f(t))dt+(u_{0k},f(0))=0,
\]
we conclude that for all test functions $f(t)\in C_0^1(\R_+,X_0)$
\[
\int_{\R_+}(u(t),f'(t)+A_0f(t))dt+(u_0,f(0))=0.
\]
Hence, $u(t)$ is a g.s. of problem (\ref{e1}), (\ref{c1}), as was to be proved.

Conversely, assume that the function $u(t)=T_tu_0$ is a g.s. of (\ref{e1}), (\ref{c1}) for each $u_0\in H$. If $u_0\in D(B)$, then  $u(t)=T_tu_0\in C^1(\R_+,H)$ and  $u'(0)=Bu_0$. Therefore, for all $v\in D(A_0)$
\[
I(t)=(u(t),v)\in C^1(\R_+), \ I'(0)=(Bu_0,v).
\]
On the other hand for all $h(t)\in C_0^1(\R_+)$
\[
\int_{\R_+} I(t)h'(t)dt=\int_{\R_+} (u(t),vh'(t))dt=
-h(0)(u_0,v)-\int_{\R_+} (u(t),A_0v)h(t)dt
\]
by identity (\ref{gs}) with $f=h(t)v$. Integrating by parts, we arrive at
\[
\int_{\R_+} I'(t)h(t)dt=\int_{\R_+} (u(t),A_0v)h(t)dt.
\]
Since $h(t)\in C_0^1(\R_+)$ is arbitrary, we obtain the equality $I'(t)=(u(t),A_0v)$. In particular,
$(u_0,A_0v)=I'(0+)=(Bu_0,v)$. The identity $(u_0,A_0v)=(Bu_0,v)$ holds for all $v\in D(A_0)$, and by the definition of the adjoint operator $A^*=(A_0)^*$ we obtain the inclusion $u_0\in D(A^*)$ and the equality $A^*u_0=Bu_0$. Thus, $B\subset A^*$, which completes the proof.
\end{proof}

By the Lumer-Phillips theorem (see, for instance, \cite[Theorem~1.1.3]{Phil}) a $C_0$-semigroup $T_t=e^{tB}$ is contractive (i.e., $\|T_t\|\le 1$ $\forall t>0$) if and only if its generator $B$ is an $m$-dissipative operator. Remind that an operator $A$ in a Banach space $H$ is called dissipative if $\|u-hAu\|\ge \|u\|$ $\forall u\in D(A)$, $h>0$. A dissipative operator $A$ is called $m$-dissipative if $\Im (E-hA)=X$ for all $h>0$ (it is sufficient that this property is satisfied only for one value $h>0$). It is known that an $m$-dissipative operator is a maximal dissipative operator, in the case of a Hilbert space $H$ the inverse statement is also true. Notice also that in the case of Hilbert space $H$ dissipativity of an operator $A$ reduces to the condition  $(Au,u)\le 0$ $\forall u\in D(A)$. In particular, skew-symmetric operators are dissipative. The following property means that the requirement $B\subset A^*$ of Theorem~\ref{th1} for an
$m$-dissipative operator is equivalent to the condition that $B$ is an extension of the operator $-A$.

\begin{lemma}\label{lem1}
Let $B$ be an $m$-dissipative operator in $H$. Then $B\subset A^*\Leftrightarrow -A\subset B$.
\end{lemma}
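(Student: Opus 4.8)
The plan is to establish the two inclusions separately. For $-A\subset B\Rightarrow B\subset A^*$ I expect to need only dissipativity of $B$ and the skew-symmetry of $A$; for the reverse implication I will additionally use the surjectivity part of $m$-dissipativity.

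First I would treat $-A\subset B\Rightarrow B\subset A^*$. Fix $v\in D(B)$; by the characterisation of the adjoint $A^*=(A_0)^*$ together with the density of $X_0$ in $X$ (Remark~\ref{rem1}), it is enough to check the single-variable identity $(A\varphi,v)=(\varphi,Bv)$ for every $\varphi\in D(A)$, since this exactly says $v\in D(A^*)$ and $A^*v=Bv$. The crucial point is that $-A\subset B$ forces $D(A)\subset D(B)$, so $\varphi+sv\in D(B)$ for all $s\in\R$ with $B(\varphi+sv)=-A\varphi+sBv$. Substituting into the dissipativity inequality $(B(\varphi+sv),\varphi+sv)\le 0$ and expanding, the term of order $s^0$ is $(-A\varphi,\varphi)=0$ by skew-symmetry, leaving $s\bigl[(-A\varphi,v)+(Bv,\varphi)\bigr]+s^2(Bv,v)\le 0$ for all real $s$. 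Taking $s\to0$ through values of both signs forces the coefficient of $s$ to vanish, which is precisely $(A\varphi,v)=(\varphi,Bv)$.

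Next I would prove $B\subset A^*\Rightarrow -A\subset B$. Fix $u\in D(A)$; the aim is $u\in D(B)$ with $Bu=-Au$. Since $B$ is $m$-dissipative, $\Im(E-B)=H$, so there exists $v\in D(B)$ with $v-Bv=u+Au$, and it suffices to prove $z:=v-u=0$. From $B\subset A^*$ we get $v\in D(A^*)$ with $A^*v=Bv$, and from $-A\subset A^*$ we get $u\in D(A^*)$ with $A^*u=-Au$; hence $z\in D(A^*)$ and, using $Bv=v-u-Au$, a one-line computation gives $A^*z=Bv+Au=v-u=z$. Now I would compute $(Bv,v)$: writing $Bv=z-Au$ and using the adjoint relation $(Au,v)=(u,A^*v)=(u,Bv)=(u,z)-(u,Au)=(u,z)$ (the last scalar product vanishing by skew-symmetry), one finds $(Bv,v)=(z,v)-(u,z)=(z,z)=\|z\|^2$. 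Dissipativity of $B$ then yields $\|z\|^2=(Bv,v)\le0$, so $z=0$, i.e. $v=u$; then $u\in D(B)$ and $u-Bu=v-Bv=u+Au$, so $Bu=-Au$, which gives $-A\subset B$.

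The main obstacle is the first implication. The tempting route --- show that $B$ is skew-symmetric on all of $D(B)$, whence $B\subset-B^*\subset A^*$ --- is doomed, because an $m$-dissipative extension of $-A$ is generally not skew-symmetric (already $-d/dx$ with a one-sided boundary condition on $L^2(0,1)$ is a counterexample). The resolution is to notice that the skew-symmetry identity is only needed when one slot is filled by an element of the \emph{smaller} space $D(A)$, on which $(-A\varphi,\varphi)=0$; this is exactly what makes the one-parameter argument with $\varphi+sv$ succeed. The remaining steps --- the algebra of the scalar products and the bookkeeping with $D(A),D(A^*),D(B)$ --- are routine.
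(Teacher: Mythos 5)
Your proof is correct, and one half of it diverges from the paper in an interesting way. The implication $-A\subset B\Rightarrow B\subset A^*$ is essentially the paper's argument verbatim: the quadratic function $s\mapsto(B(\varphi+sv),\varphi+sv)\le 0$ with vanishing constant term (by skew-symmetry of $A$ on $D(A)\subset D(B)$), forcing the linear coefficient $(A\varphi,v)-(\varphi,Bv)$ to vanish. For the converse, however, the paper builds the operator $\tilde B u=-Au_1+Bu_2$ on $D(A)+D(B)$, checks it is well defined and dissipative, and invokes the fact that an $m$-dissipative operator is \emph{maximal} dissipative to conclude $\tilde B=B$ and hence $D(A)\subset D(B)$. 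You instead use the surjectivity $\Im(E-B)=H$ directly: solve $v-Bv=u+Au$, set $z=v-u$, and the identity $(Bv,v)=(z,z-Au+Au)=\|z\|^2\le 0$ (using $(Au,v)=(u,A^*v)=(u,Bv)$ and $(Au,u)=0$) forces $z=0$, so $u\in D(B)$ with $Bu=-Au$. Your route trades the maximality principle for the range condition, which is part of the definition of $m$-dissipativity anyway, so it is self-contained and avoids citing the equivalence between $m$-dissipative and maximal dissipative; the paper's route has the mild advantage of exhibiting explicitly the dissipative extension $\tilde B$ that maximality then collapses onto $B$. (The observation $A^*z=z$ in your argument is correct but unused; the proof closes without it.)
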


\begin{proof}
Let $B\subset A^*$. Since also $-A\subset A^*$, then $-Au=Bu=A^*u$ on $D(A)\cap D(B)$. This allows to construct an extension  $\tilde B$
of both the operators $-A$ and $B$ defined on $D(A)+D(B)$ by the equality
\[
\tilde Bu=-Au_1+Bu_2, \quad u=u_1+u_2, \ u_1\in D(A), u_2\in D(B).
\]
Since the operators $-A$ and $B$ coincide on their common domain $D(A)\cap D(B)$, the above definition of the operator $\tilde B$ is correct and does not depend on the representation $u=u_1+u_2$. By the construction $-A\subset \tilde B$, $B\subset \tilde B$. Let us show that the operator $\tilde B$ is dissipative. We take an arbitrary $u\in D(A)+D(B)$. Then $u=u_1+u_2$ for some $u_1\in D(A)$, $u_2\in D(B)$, and
\begin{align}\label{sk2}
(\tilde Bu,u)=(-Au_1+Bu_2,u_1+u_2)=\nonumber\\-(Au_1,u_1)+(Bu_2,u_2)+((u_1,Bu_2)-(Au_1,u_2))=\nonumber\\
(Bu_2,u_2)+((u_1,A^*u_2)-(Au_1,u_2))=(Bu_2,u_2)\le 0
\end{align}
by dissipativity of the operator $B$. We take also into account that $(Au_1,u_1)=0$ by skew-symmetricity of $A$ and that
$B\subset A^*$. By (\ref{sk2}) the operator $\tilde B$ is dissipative. But $B$ is a maximal dissipative operator and we conclude that $\tilde B=B$. In particular, $D(A)+D(B)=D(B)$, that is, $D(A)\subset D(B)$. This implies that $-A\subset B$.

Conversely, suppose that $-A\subset B$. Let $u\in D(A)$, $v\in D(B)$. Then $u+sv\in D(B)$ for all $s\in\R$ and by dissipativity of $B$ we have $f(s)\doteq (B(u+sv),u+sv)\le 0$. The function $f(s)$ is quadratic and can be written as
\begin{align}\label{sk3}
f(s)=(Bu,u)+s((Bu,v)+(Bv,u))+s^2(Bv,v)=\nonumber\\ s((Bu,v)+(Bv,u))+s^2(Bv,v),
\end{align}
because  $(Bu,u)=-(Au,u)=0$ by the condition $-A\subset B$. Notice also that $(Bv,v)\le 0$. It now follows from representation
(\ref{sk3}) and the condition $f(s)\le 0$ $\forall s\in\R$ that $(Bu,v)+(Bv,u)=0$. Thus, for all $v\in D(B)$ and $u\in D(A)$ \[(Au,v)=-(Bu,v)=(Bv,u)=(u,Bv).\]
This identity implies that $v\in D(A^*)$ and $A^*v=Bv$ for all $v\in D(B)$. This means that $B\subset A^*$. The proof is complete.
\end{proof}
We underline that the implication $-A\subset B \Rightarrow B\subset A^*$ was also established in \cite[Lemma~1.1.5]{Phil}.

\section{Main results}

By Theorem~\ref{th1} and Lemma~\ref{lem1} generators of contractive semigroups of g.s. to equation (\ref{e1}) are exactly $m$-dissipative extensions of the operator $-A$. Existence of such extensions is well-known, see for example
\cite{Phil}. We want to construct such an extension that is ``maximally close'' to a skew-symmetric operator. First, we need to choose a skew-symmetric extension of $-A$. Such extensions reduces to symmetric extensions of the symmetric operator $iA$, which are based on the Cayley transform. In the framework of skew-symmetric operators the Cayley transform $Q=(E+A)(E-A)^{-1}$ is even more natural. The operator $Q$ is an isometry between the spaces $H_-=\Im (E-A)$ and $H_+=\Im (E+A)$ (notice that the spaces $H_\pm$ are closed due to the closedness of operator $A$). The Cayley transform is invertible, the inverse transform is defined by the formula $A=(Q-E)(Q+E)^{-1}$ (under the assumption that the domain
$D(A)=\Im(Q+E)$ is dense, the operator $Q+E$ is always invertible). Hence, skew-symmetric extensions of the operator $A$ corresponds to isometric extensions of the operator $Q$, which are reduced to construction of isometric maps from $(H_-)^\perp$ into $(H_+)^\perp$. Remind that Hilbert dimensions of these spaces (i.e., Hilbert codimensions of $H_\pm$) are called the deficiency indexes of $A$. We denote them $d_+$, $d_-$, respectively, so that $d_\pm=d_\pm(A)=\codim H_\pm$. Maximal skew-symmetric operators $A$ are characterized by the condition that at least one of the deficiency indexes $d_\pm(A)$ is null. The condition $d_+=d_-=0$ describes skew-adjoint operators, their Cayley transforms $Q$ are orthogonal operators. It is known that any skew-symmetric operator can be extended to a maximal skew-symmetric operator. Let $\tilde A$ be a maximal skew-symmetric extension of $A$, and $d_\pm=d_\pm(\tilde A)$ be the deficiency indexes of this extension. Then one of these indexes is null. If $d_+=0$ then $\Im(E+\tilde A)=H$ and the operator $B=-\tilde A$ is $m$-dissipative.
Similarly, if $d_-=0$ the operator $\tilde A$ is $m$-dissipative. It is known (see, for instance, \cite[Theorem~1.1.2]{Phil}) that the operator $(\tilde A)^*$ is also $m$-dissipative, and that
$-A\subset -\tilde A\subset (\tilde A)^*$ by skew-symmetricity of $\tilde A$. Taking
$B=(\tilde A)^*$, we obtain an $m$-dissipative extension of $-A$ in the case $d_-=0$. We established the following result.

\begin{theorem}\label{th2}
There exists a contractive semigroup of g.s. $u(t)=e^{tB}u_0$ of equation (\ref{e1}) such that the following two cases are possible:

(i) The generator $B$ is skew-symmetric. In this case the operators $T_t=e^{tB}$ are isometric embeddings and the conservation of energy $\|u(t)\|=\|u_0\|$ holds for all $t>0$;

(ii) the adjoint operator $B^*$ is skew-symmetric. Then the operators $(T_t)^*$ are isometric embeddings.
\end{theorem}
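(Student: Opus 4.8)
The plan is as follows. By Remark~\ref{rem1}(2) I may assume that $A_0=A$ is a closed skew-symmetric operator. Combining Theorem~\ref{th1} with Lemma~\ref{lem1} (and the Lumer--Phillips theorem quoted after Theorem~\ref{th1}), a $C_0$-semigroup $e^{tB}$ furnishes g.s.\ of (\ref{e1}), (\ref{c1}) for every $u_0\in H$ and is contractive if and only if $B$ is a densely defined $m$-dissipative extension of $-A$. So the whole task is to produce such a $B$ that is, in addition, as close to skew-symmetric as possible. For this I first extend $A$ to a \emph{maximal} skew-symmetric operator $\tilde A$. Such an extension exists by the Cayley transform correspondence recalled above: skew-symmetric extensions of $A$ correspond to isometric extensions of $Q=(E+A)(E-A)^{-1}$, and a Zorn's lemma argument (equivalently, choosing an isometry from $(H_-)^\perp$ into $(H_+)^\perp$ that exhausts the smaller of the two deficiency subspaces) yields a maximal one. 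Being maximal, $\tilde A$ is closed --- otherwise its closure would be a strictly larger skew-symmetric extension --- and one of its deficiency indexes $d_\pm=d_\pm(\tilde A)$ equals $0$. I then split into two cases.

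Case $d_+=0$. Here $\Im(E+\tilde A)$ is dense and, being closed by closedness of $\tilde A$, equals $H$. Set $B=-\tilde A$. Then $B$ is densely defined (since $D(\tilde A)\supset D(A)$ is dense), dissipative because in the Hilbert space $H$ dissipativity reduces to $(Bu,u)=-(\tilde Au,u)=0\le 0$, and $\Im(E-B)=\Im(E+\tilde A)=H$, so $B$ is $m$-dissipative; moreover $-A\subset-\tilde A=B$. By the reduction above, $u(t)=e^{tB}u_0$ are g.s.\ and $T_t=e^{tB}$ is contractive. Since $B$ is skew-symmetric, for $u_0\in D(B)$ the $C^1$ function $t\mapsto\|T_tu_0\|^2$ has derivative $2(BT_tu_0,T_tu_0)=0$, so $\|u(t)\|=\|u_0\|$ on the dense set $D(B)$ and, by continuity of $T_t$, on all of $H$; in particular each $T_t$ is an isometric embedding. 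This gives (i).

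Case $d_-=0$. Here $\Im(E-\tilde A)=H$, so $\tilde A$ is itself a densely defined $m$-dissipative operator; by the standard fact that the adjoint of an $m$-dissipative operator on a Hilbert space is again $m$-dissipative (\cite[Theorem~1.1.2]{Phil}), the operator $B:=(\tilde A)^*$ is densely defined and $m$-dissipative. By skew-symmetricity of $\tilde A$ we have $-A\subset-\tilde A\subset(\tilde A)^*=B$, so $B$ is a densely defined $m$-dissipative extension of $-A$, and the reduction again gives that $u(t)=e^{tB}u_0$ are g.s.\ and $T_t$ is contractive. Since $\tilde A$ is closed, $B^*=(\tilde A)^{**}=\tilde A$, which is skew-symmetric and (being $m$-dissipative and densely defined) generates a contraction semigroup. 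The adjoint semigroup satisfies $(T_t)^*=e^{tB^*}$, and applying the energy identity of the previous case to the contraction semigroup generated by the skew-symmetric operator $B^*=\tilde A$ shows that each $(T_t)^*$ is an isometric embedding. This gives (ii) and completes the argument.

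The only genuinely nontrivial ingredient is the preliminary step: the existence of a maximal skew-symmetric extension $\tilde A$ of $A$ with one vanishing deficiency index. This is precisely where the Cayley transform and the maximality (Zorn's lemma) argument enter. Once $\tilde A$ is available, the rest is bookkeeping with Theorem~\ref{th1} and Lemma~\ref{lem1}, the Lumer--Phillips theorem, the $m$-dissipativity of Hilbert-space adjoints, the identity $(T_t)^*=e^{tB^*}$ for $C_0$-semigroups, and the one-line differentiation of $\|T_tu_0\|^2$ for a skew-symmetric generator.
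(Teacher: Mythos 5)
Your proposal is correct and follows essentially the same route as the paper: extend $A$ to a maximal skew-symmetric operator $\tilde A$ via the Cayley transform, and take $B=-\tilde A$ when $d_+(\tilde A)=0$ or $B=(\tilde A)^*$ when $d_-(\tilde A)=0$, invoking Theorem~\ref{th1}, Lemma~\ref{lem1} and the Lumer--Phillips theorem. Your additional verifications (closedness of a maximal extension, the identity $B^*=(\tilde A)^{**}=\tilde A$, and the differentiation of $\|T_tu_0\|^2$ establishing the isometry claims) only flesh out details the paper leaves implicit.
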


Concerning the uniqueness of a semigroup of g.s., the following statement holds.

\begin{theorem}\label{th3}
A contractive semigroup of g.s. of equations (\ref{e1}) is unique if and only if the skew-symmetric operator $A$ is maximal.
\end{theorem}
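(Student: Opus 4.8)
The plan is to characterize uniqueness of the contractive semigroup of g.s.\ in terms of uniqueness of the $m$-dissipative extension of $-A$, and then to connect the latter with maximality of $A$. By Theorem~\ref{th1} and Lemma~\ref{lem1}, contractive semigroups of g.s.\ of \eqref{e1} are in one-to-one correspondence with $m$-dissipative extensions $B$ of $-A$ (two distinct generators $B_1\neq B_2$ give distinct semigroups, since a $C_0$-semigroup determines its generator uniquely). So the statement reduces to: the $m$-dissipative extension of $-A$ is unique if and only if $A$ is maximal skew-symmetric.

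For the ``if'' direction, suppose $A$ is maximal skew-symmetric, so one of the deficiency indexes $d_\pm(A)$ is zero. If $d_+=0$ then $\Im(E+A)=H$, so $-A$ is already $m$-dissipative; I claim it is the only $m$-dissipative extension of itself. Indeed, let $B$ be any $m$-dissipative extension of $-A$. For $u\in D(B)$, dissipativity and $m$-dissipativity of $-A$ give: pick $w\in D(A)$ with $(E+A)w=(E-B)u$, i.e.\ $(E-B)w=(E-B)u$ since $-A\subset B$; then $(E-B)(u-w)=0$, and dissipativity of $B$ forces $u=w\in D(A)$. Hence $D(B)\subset D(A)$, so $B=-A$. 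The case $d_-=0$ is symmetric: then $\Im(E-A)=H$, and one argues with $E+B$ in place of $E-B$ using that $-A$ is $m$-dissipative for the resolvent parameter $h>0$ applied to $(E+hB)$; alternatively observe that $B$ $m$-dissipative with $-A\subset B$ forces, by the same surjectivity argument applied to $A$ itself (which is now ``$m$-accretive''), that $B=-A$ is the unique such extension. Either way the contractive semigroup of g.s.\ is unique.

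For the ``only if'' direction, suppose $A$ is not maximal, so both deficiency indexes are strictly positive, $d_+>0$ and $d_->0$. Then I produce at least two distinct $m$-dissipative extensions of $-A$. The first: by the construction preceding Theorem~\ref{th2}, extend $A$ to a maximal skew-symmetric $\tilde A_1$ with $d_+(\tilde A_1)=0$ and take $B_1=-\tilde A_1$, which is $m$-dissipative and properly extends $-A$ (proper because $d_+(A)>0$ means $A\neq\tilde A_1$). The second: since $d_->0$ we may also form a maximal skew-symmetric extension $\tilde A_2$ with $d_-(\tilde A_2)=0$, and take $B_2=(\tilde A_2)^*$, again $m$-dissipative and an extension of $-A$. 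It remains to check $B_1\neq B_2$. Here the key observation is that $B_1=-\tilde A_1$ is skew-symmetric (so $(B_1 u,u)=0$ on $D(B_1)$, hence $T_t=e^{tB_1}$ is isometric), while $B_2=(\tilde A_2)^*$ has adjoint $B_2^*=-\tilde A_2$ skew-symmetric; if we had $B_1=B_2$ then $B_1$ would be both skew-symmetric and have skew-symmetric adjoint, forcing $B_1=B_1^*=-\tilde A_1=-\tilde A_2$ to be skew-adjoint, which would mean $d_+(\tilde A_1)=d_-(\tilde A_1)=0$; but a skew-adjoint extension of $A$ would force $d_+(A)=d_-(A)$, and — more to the point — $A$ would then already be contained in a skew-adjoint operator whose restriction matches on $D(A)$; this does not immediately contradict $d_\pm(A)>0$. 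So this cosmetic argument is not quite enough, and the cleanest fix is to instead exhibit two distinct maximal skew-symmetric extensions directly: with $d_+(A),d_-(A)>0$ one has genuine freedom in choosing the isometric map from $(H_-)^\perp$ into $(H_+)^\perp$ defining the Cayley transform extension, and two different such choices (possible as soon as the spaces are nontrivial, e.g.\ a rank-one isometry versus the zero-rank one, or two isometries differing by a phase) yield $B_1=-\tilde A_1\neq -\tilde A_2=B_2$, both $m$-dissipative, both extending $-A$.

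The main obstacle I anticipate is exactly this last point: cleanly producing \emph{two} distinct $m$-dissipative extensions of $-A$ from the hypothesis $d_+>0$, $d_->0$, and verifying they are genuinely distinct as operators (not merely as abstract data). The robust route is to work through the Cayley transform: distinct partial isometries $V_1\neq V_2$ from a subspace of $(H_-)^\perp$ into $(H_+)^\perp$ give distinct isometric extensions $Q_1\neq Q_2$ of $Q=(E+A)(E-A)^{-1}$, hence (via $A_i=(Q_i-E)(Q_i+E)^{-1}$, assuming $D(A_i)$ dense — which one must also arrange, restricting to finite-rank $V_i$ if necessary so that $\Im(Q_i+E)\supset D(A)$ stays dense) distinct skew-symmetric extensions $\tilde A_i\supset A$, and one checks $\tilde A_1\neq\tilde A_2$ on the part of the domain where the extensions diverge. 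Choosing the $V_i$ so that at least one of them produces a \emph{maximal} skew-symmetric $\tilde A_i$ (by taking $V_i$ with full domain $(H_-)^\perp$ when $d_-\le d_+$, giving $d_-(\tilde A_i)=0$), and the other distinct, furnishes $B_1\neq B_2$ both $m$-dissipative extensions of $-A$, and hence two distinct contractive semigroups of g.s.
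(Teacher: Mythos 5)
Your reduction of the theorem to uniqueness of the $m$-dissipative extension of $-A$, and your treatment of the ``only if'' direction (two distinct maximal skew-symmetric extensions of $A$ obtained from two distinct isometries $(H_-)^\perp\to(H_+)^\perp$ in the Cayley picture, yielding two distinct $m$-dissipative extensions and hence two distinct contractive semigroups), follow the paper's route. The hedging in your last paragraph is unnecessary: density of the domain of a skew-symmetric extension produced by the inverse Cayley transform is automatic, and if both extensions are taken with $d_-(\tilde A_i)=0$ then $B_i=(\tilde A_i)^*$ are distinct because $\tilde A_i=(\tilde A_i)^{**}$ for closed densely defined operators.

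The genuine gap is in the ``if'' direction, in the case $d_-=0$, $d_+>0$. Your conclusion there, that ``$B=-A$ is the unique such extension,'' is false: $-A$ is not even $m$-dissipative in this case, since $\Im(E-(-A))=\Im(E+A)\ne H$ precisely when $d_+>0$; the unique $m$-dissipative extension of $-A$ is then $A^*$, which is a \emph{proper} extension of $-A$ (this is exactly case (ii) of Theorem~\ref{th2}, where the generator is not skew-symmetric but has skew-symmetric adjoint). Your proposed ``symmetric'' argument with $E+B$ in place of $E-B$ cannot be repaired, because dissipativity of $B$ gives the lower bound $\|(E-hB)v\|\ge\|v\|$ only for $h>0$ and says nothing about $\|(E+B)v\|$ (e.g.\ $B=-E$ is dissipative and $E+B=0$); thus $(E+B)(u-w)=0$ does not force $u=w$. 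The correct argument, which is the one in the paper, is dual to your $d_+=0$ case: by Theorem~\ref{th1} and Lemma~\ref{lem1} any admissible generator satisfies $B\subset A^*$; when $d_-=0$ the operator $A^*$ is itself $m$-dissipative (\cite[Theorem~1.1.2]{Phil}), in particular dissipative, and since $B$, being $m$-dissipative, is a maximal dissipative operator, the inclusion $B\subset A^*$ forces $B=A^*$. With this replacement the proof is complete.
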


\begin{proof}
Suppose that the skew-symmetric operator $A$ is maximal and that $d_\pm$ are its deficiency indexes. If $u(t)=e^{tB}u_0$ is a contractive semigroup of g.s. then its generator $B$ is $m$-dissipative and $-A\subset B\subset A^*$ by Theorem~\ref{th1} and Lemma~\ref{lem1}. As has been already demonstrated above, one of the operators $-A$, $A^*$ is $m$-dissipative, $-A$ in the case $d_+=0$ and $A^*$ if $d_-=0$. Since $B$ is an $m$-dissipative operator as well while such operators are maximal dissipative operators, we conclude that $B=-A$ if $d_+=0$, and $B=A^*$ if $d_-=0$ (notice that $B=-A=A^*$ in the case $d_+=d_-=0$). Thus, the operator $B$ is uniquely defined and the corresponding semigroup is also unique.

Conversely, if the operator $A$ is not maximal then both its deficiency indexes are not zero. As is easy to realize, then there exist two different maximal skew-symmetric extensions $\tilde A_1$, $\tilde A_2$ of the operator $A$ with the same deficiency  indexes $d_\pm$. The corresponding $m$-dissipative operators \[B_1=\left\{\begin{array}{lcr} -\tilde A_1 & , & d_+=0, \\ (\tilde A_1)^* & , & d_-=0, \end{array}\right. \quad B_2=\left\{\begin{array}{lcr} -\tilde A_2 & , & d_+=0, \\ (\tilde A_2)^* & , & d_-=0 \end{array}\right.\] are different and generate different contractive semigroups of g.s.
\end{proof}

Now we are going to prove uniqueness of g.s. to problem  (\ref{e1}), (\ref{c1}).

\begin{theorem}\label{th4} A g.s. of the Cauchy problem (\ref{e1}), (\ref{c1}) is unique if and only if the deficiency index $d_-$ of skew-symmetric operator $A$ equals zero.
\end{theorem}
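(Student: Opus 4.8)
The plan is to reduce the theorem, by linearity of the defining relation (\ref{gs}), to the assertion that a g.s.\ with $u_0=0$ must vanish identically, and then to show that this holds exactly when $d_-=0$. As preliminaries I would record that $A$, being skew-symmetric, is dissipative; that the subspace $H_-=\Im(E-A)$ is closed, so $d_-=\codim H_-=\dim\ker(E-A^*)$; and that consequently $d_-=0$ is equivalent to $\Im(E-A)=H$, i.e.\ (one value of $h$ suffices) to $A$ being $m$-dissipative, in which case $A$ is already maximal skew-symmetric, generates a contraction $C_0$-semigroup $e^{tA}$, and so does its adjoint $A^*$ (the adjoint semigroup $e^{tA^*}=(e^{tA})^*$, cf.\ the discussion preceding Theorem~\ref{th2}).

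For the implication $d_-=0\Rightarrow$ uniqueness I would argue as follows. Let $u$ be a g.s.\ with $u_0=0$ and extend it by zero to $t<0$, obtaining $\bar u\in L^\infty_{loc}(\R,H)$. Inserting into (\ref{gs}) (in the extended form of Remark~\ref{rem1}(2)) the restrictions to $\R_+$ of test functions $f\in C_0^1(\R,X)$ and using $u_0=0$ gives $\int_\R(\bar u(t),f'(t)+Af(t))\,dt=0$ for all such $f$; equivalently, $\frac{d}{dt}(\bar u(t),v)=(\bar u(t),Av)$ in $\D'(\R)$ for each fixed $v\in D(A)$. Mollifying $\bar u$ in $t$ by $\rho_\varepsilon$ with $\supp\rho_\varepsilon\subset[-\varepsilon,\varepsilon]$, the function $u_\varepsilon=\bar u*\rho_\varepsilon$ is $C^\infty$ in $t$, vanishes for $t\le-\varepsilon$, and satisfies $(u_\varepsilon'(t),v)=(u_\varepsilon(t),Av)$ for all $t$ and all $v\in D(A)$; by the definition of the adjoint this means $u_\varepsilon(t)\in D(A^*)$ and $u_\varepsilon'(t)=A^*u_\varepsilon(t)$, so $u_\varepsilon$ is a classical solution of (\ref{e1}) on $\R$ that vanishes at $t=-\varepsilon$. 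Uniqueness of classical solutions of the abstract Cauchy problem for the generator $A^*$ (the curve $s\mapsto e^{(t-s)A^*}u_\varepsilon(s)$ is constant on $[-\varepsilon,t]$) forces $u_\varepsilon\equiv0$; letting $\varepsilon\to0$ gives $\bar u=0$, hence $u=0$. A dual route avoids the extension: for $g\in D(A)$ one pairs $u$ with the backward profile $t\mapsto e^{(t_0-t)A}g\in D(A)$ and checks, via Remark~\ref{rem1}(1) and an approximation of this profile by finite sums $\sum_j v_j\chi_j(t)$ with $v_j\in D(A)$, $\chi_j\in C_0^1$, that $t\mapsto(u(t),e^{(t_0-t)A}g)$ is constant on $[0,t_0]$; hence $(u(t_0),g)=(u_0,e^{t_0A}g)=0$ and $u(t_0)=0$ by density of $D(A)$ in $H$.

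For the implication $d_-\ne0\Rightarrow$ non-uniqueness, observe that $d_-\ne0$ yields $w\ne0$ with $A^*w=w$ (a nonzero element of $\ker(E-A^*)$). Since $(w,Af(t))=(A^*w,f(t))=(w,f(t))$ for $f(t)\in X$, a direct computation with Definition~\ref{def1} shows that $u(t)=e^tw$ satisfies (\ref{gs}) with initial datum $w$, so this exponentially growing function is a g.s.\ with datum $w$. On the other hand, by Theorem~\ref{th2} there is a contractive semigroup $e^{tB}$ of g.s., so $\tilde u(t)=e^{tB}w$ is a g.s.\ with the same datum $w$ and $\|\tilde u(t)\|\le\|w\|$. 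By linearity of (\ref{gs}) the difference $v=u-\tilde u$ is a g.s.\ with datum $0$; since $\|v(t)\|\ge\|u(t)\|-\|\tilde u(t)\|\ge(e^t-1)\|w\|>0$ for $t>0$, we have $v\not\equiv0$, so g.s.\ are not unique.

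The step I expect to be the main obstacle is the sufficiency part, specifically the transition from the weak identity (\ref{gs}) to a genuine differential equation: since $u$ is a priori only in $L^\infty_{loc}(\R_+,H)$ and merely weakly continuous, one cannot differentiate it naively, so the (otherwise standard) fact that the Cauchy problem for the generator $A^*$ has no nonzero classical solution with zero data must be made applicable by a regularization --- mollification in time as above, or the approximation of the backward test profile --- after which semigroup uniqueness closes the argument.
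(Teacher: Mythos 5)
Your proposal is correct. The necessity half ($d_-\ne 0\Rightarrow$ non-uniqueness via a nonzero $w\in\ker(E-A^*)$ and the growing solution $e^tw$ compared against the contractive semigroup solution of Theorem~\ref{th2}) is exactly the paper's argument, and your secondary ``dual route'' for sufficiency is essentially the paper's proof: there one takes $v(t)=e^{(t_0-t)A}v_0$ with $v_0\in D(A)$, multiplies by the one-sided cutoff $\theta_\nu(t-t_0)$ to get an admissible test function (via Remark~\ref{rem1}(2)), and uses the weak continuity of $u$ from Remark~\ref{rem1}(1) to pass to the limit $\nu\to\infty$ and conclude $(u(t_0),v_0)=0$. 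Your primary sufficiency argument is a genuinely different, and valid, variant: instead of regularizing the test function you regularize the solution, extending by zero across $t=0$ (which is where $u_0=0$ enters, through the vanishing boundary term in (\ref{gs})), mollifying in time, and upgrading the weak identity to a classical solution of $u'=A^*u$ that vanishes at $t=-\varepsilon$; then the standard uniqueness of classical solutions for a semigroup generator finishes the proof. What this buys is a cleaner conceptual statement (no cutoffs, no weak-continuity endpoint limit), at the price of invoking the fact that $A^*$ generates the adjoint semigroup $(e^{tA})^*$ --- true in a Hilbert space, but an extra ingredient the paper avoids by working exclusively with the semigroup $e^{tA}$ generated by the $m$-dissipative operator $A$ itself. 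The two computations are dual to each other: the paper differentiates $s\mapsto(u(s),e^{(t_0-s)A}v_0)$ in the distributional sense, you differentiate $s\mapsto e^{(t-s)A^*}u_\varepsilon(s)$ strongly.
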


\begin{proof}
Suppose that a g.s. of problem (\ref{e1}), (\ref{c1}) is unique. Let us show that the deficiency index $d_-$ of the operator $A$ is null. Assuming the contrary, $d_->0$, we find that the closed subspace $\Im (E-A)$ is a proper subspaces of $H$. Then $\ker(E-A^*)=(\Im(E-A))^\perp\not=\{0\}$ and therefore there exists a nonzero vector $u_0\in D(A^*)$ such that  $A^*u_0=u_0$. As is easy to see, the function $u=e^t u_0$ is a g.s. of problem (\ref{e1}), (\ref{c1}), different from the semigroup g.s. $T_tu_0$ constructed in Theorem~\ref{th2}, because the latter is bounded. We also notice that for all $t_0\ge 0$ the functions
\[
\tilde u(t)=\left\{\begin{array}{lcr} e^tu_0, & , & 0\le t\le t_0, \\ e^{t_0}T_{t-t_0}u_0 & , & t\ge t_0 \end{array} \right.
\]
are pairwise different bounded g.s. of the problem (\ref{e1}), (\ref{c1}), and we come to a contradiction even with the condition of uniqueness of a bounded (!) g.s. Hence, $d_-=0$.

Conversely, assume that $d_-=0$. Let $u(t)$ be a g.s. of (\ref{e1}), (\ref{c1}) with null initial data. By the linearity,
it is sufficient to prove that $u(t)\equiv 0$. Since $d_+(-A)=d_-=0$ then, according to the proof of Theorem~\ref{th2} the operator $A$ generates the semigroup  $u=e^{tA}u_0$ of g.s. to the equation $u'+A^*u=0$. Let $v_0\in D(A)$ and $v(t)=e^{(t_0-t)A}v_0$, $t\le t_0$. Then $v(t)\in C^1([0,t_0],H)\cap C([0,t_0],D(A))$ and $v'(t)=-Av(t)$.
We choose such a function $\beta(s)\in C_0(\R)$ that $\supp\beta(s)\subset [-1,0]$, $\beta(s)\ge 0$, $\displaystyle\int\beta(s)ds=1$, and set for ${\nu\in\N}$  \ $\beta_\nu(s)=\nu\beta(\nu s)$,
$\displaystyle\theta_\nu(t)=\int_t^{+\infty} \beta_\nu(s)ds$. It is clear that
\[\beta_\nu(s)\in C_0^1(\R), \ \supp\beta_\nu(s)\subset [-1/\nu,0], \ \beta_\nu(s)\ge 0, \ \int\beta_\nu(s)ds=1.\] Therefore, the sequence $\beta_\nu(s)$ converges as $\nu\to\infty$ to the Dirac
$\delta$-function in $\D'(\R)$ while $\theta_\nu(t)\in C^1(\R)$ are decreasing functions, $\theta_\nu(t)=1$ for $t\le -1/\nu$, $\theta_\nu(t)=0$ for $t\ge 0$ and the sequence $\theta_\nu(t)$ converges point-wise as $\nu\to\infty$ to the function $\theta(-t)$, where $\displaystyle\theta(s)=\left\{\begin{array}{ll} 0, & s\le 0, \\ 1, & s>0.
\end{array}\right. $ is the Heaviside function. The function $f(t)=v(t)\theta_\nu(t-t_0)$ lies in the space  $C_0^1(\R_+,H)\cap C_0(\R_+,D(A))$ (we agree that $f(t)=0$ for $t>t_0$), and by Remark~\ref{rem1}(2) it is a proper test function in relation (\ref{gs}) for the g.s. $u(t)$. Revealing this relation and taking into account the equality $f'(t)=v'(t)\theta_\nu(t-t_0)-v(t)\beta_\nu(t-t_0)=-Af(t)-v(t)\beta_\nu(t-t_0)$, we arrive at the equality
\[
-\int (u(t),v(t))\beta_\nu(t-t_0)dt=0.
\]
Since the function $(u(t),v(t))$ is continuous (this follows from strong continuity of $v(t)$ and weak continuity of $u(t)$, see Remark~\ref{rem1}(1)), this equality in the limit as $\nu\to\infty$ implies that $(u(t_0),v_0)=(u(t_0),v(t_0))=0$ for each $v_0\in D(A)$. By the density of $D(A)$ in $H$, we conclude that $u(t_0)=0$.  Since $t_0>0$ is arbitrary, $u(t)\equiv 0$, as was to be proved.
\end{proof}

We can also study the backward Cauchy problem for equation (\ref{e1}), considered for the time $t<T$ with the Cauchy data
$u(T,x)=u_0(x)$ at the final moment $t=T$. After the change $t\to T-t$ it is reduced to the standard problem (\ref{e1}), (\ref{c1}) but related to the operator $-A$ instead of $A$. Therefore, the uniqueness of g.s. to the backward Cauchy problem is equivalent to the condition $d_+=d_-(-A)=0$. In particular, the uniqueness of g.s. for both forward and backward
problems is equivalent to the requirement $d_+=d_-=0$, that is, to the condition that the operator $A$ is skew-adjoint.
We have proved the following statement.

\begin{corollary}\label{cor1} Uniqueness of g.s. to both the forward and the backward Cauchy problems for equation (\ref{e1}) is equivalent to skew-adjointness of the operator $A$.
\end{corollary}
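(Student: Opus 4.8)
The plan is to reduce the backward Cauchy problem to the forward one by reversing time, and then to apply Theorem~\ref{th4} twice. First I would fix the meaning of a g.s.\ of the backward problem for equation~\eqref{e1} on $(-\infty,T]$ with final datum $u(T)=u_0$: a function $u\in L^\infty_{loc}((-\infty,T],H)$ such that
\[
\int_{-\infty}^{T}(u(t),f'(t)+A_0f(t))\,dt=(u_0,f(T))
\]
for every $f$ of class $C^1$ with compact support in $(-\infty,T]$ and values in $X_0$ equipped with the graph norm. Putting $w(s)=u(T-s)$, $g(s)=f(T-s)$ and carrying out the change of variable $t=T-s$, this identity turns into
\[
\int_{\R_+}\bigl(w(s),g'(s)+(-A_0)g(s)\bigr)\,ds+(u_0,g(0))=0,
\]
which is exactly identity~\eqref{gs} of Definition~\ref{def1} written for the densely defined skew-symmetric operator $-A_0$, whose closure is the closed skew-symmetric operator $-A$. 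Since $f\mapsto g$ is a bijection from the class of admissible backward test functions onto $C^1_0(\R_+,X_0)$, the correspondence $u\mapsto w$ is a bijection between g.s.\ of the backward problem for $A$ and g.s.\ of the standard Cauchy problem \eqref{e1}, \eqref{c1} associated with $-A$. Hence uniqueness of g.s.\ for the backward problem is equivalent to uniqueness of g.s.\ for the forward problem associated with $-A$.

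Next I would apply Theorem~\ref{th4}. Directly, it gives that uniqueness of g.s.\ for the (forward) problem \eqref{e1}, \eqref{c1} is equivalent to $d_-=d_-(A)=\codim\Im(E-A)=0$. Applied to the closed skew-symmetric operator $-A$, it gives that uniqueness of g.s.\ for the forward problem associated with $-A$ is equivalent to
\[
d_-(-A)=\codim\Im\bigl(E-(-A)\bigr)=\codim\Im(E+A)=\codim H_+=d_+(A)=d_+=0.
\]
Combining this with the previous paragraph, g.s.\ are unique for both the forward and the backward Cauchy problems if and only if $d_+=d_-=0$. As recalled in Section~3 (in the discussion preceding Theorem~\ref{th2}), the equality $d_+=d_-=0$ means precisely that $\Im(E-A)=\Im(E+A)=H$, equivalently that the Cayley transform $Q=(E+A)(E-A)^{-1}$ is an orthogonal operator on $H$, equivalently that $A^*=-A$, i.e.\ that $A$ is skew-adjoint. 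This proves the corollary.

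The only step that requires genuine care is the first one: pinning down the appropriate class of test functions for the weak formulation of the backward problem and checking that the time reversal $s=T-t$ carries this class bijectively onto $C^1_0(\R_+,X_0)$ while transforming the backward weak identity into identity~\eqref{gs} for $-A_0$ — in particular keeping track of the sign coming from $g'(s)=-f'(T-s)$ and of the reversed limits in the change of variable. Everything after that is formal: the index identity $d_-(-A)=d_+(A)$ is immediate from the definitions of the deficiency indices, and the characterization of skew-adjointness by the vanishing of both deficiency indices has already been established in Section~3.
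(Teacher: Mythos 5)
Your proposal is correct and follows essentially the same route as the paper: the text immediately preceding Corollary~\ref{cor1} performs exactly this time reversal $t\to T-t$, reducing the backward problem to the forward problem for $-A$, then invokes Theorem~\ref{th4} to get the criterion $d_+=d_-(-A)=0$ and combines it with $d_-=0$ to conclude skew-adjointness. You merely spell out in more detail the change of variables in the weak formulation and the identity $d_-(-A)=d_+(A)$, both of which the paper treats as immediate.
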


We observe that for g.s. $u=\tilde u(t)$ constructed in the proof of Theorem~\ref{th4} the following energy inequality fails.
\begin{equation}\label{En}
\forall t>0 \quad E(t)\doteq\frac{1}{2}\|u(t)\|_2^2\le E(0)=\frac{1}{2}\|u_0\|_2^2.
\end{equation}
It is clear that for semigroup g.s. $u=e^{tB}u_0$ the energy $E(t)$ decreases and (\ref{En}) is fulfilled.
In the case of maximal skew-symmetric operator $A$ it turns out that any g.s. satisfying energy inequality coincides with the unique (by Theorem~\ref{th3}) semigroup g.s. More precisely, the following statement holds.

\begin{theorem}\label{th5}
A g.s. of the problem (\ref{e1}), (\ref{c1}), (\ref{En}) is unique for each initial data $u_0\in H$ if and only if the skew-symmetric operator $A$ is maximal.
\end{theorem}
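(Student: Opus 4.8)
The plan is to prove the two implications separately. For the forward implication I argue by contraposition: suppose $A$ is not maximal, so both deficiency indexes $d_\pm$ are positive. Then by Theorem~\ref{th3} there exist two distinct contractive semigroups $e^{tB_1}$, $e^{tB_2}$ of g.s.\ of (\ref{e1}), and hence some $u_0\in H$ with $e^{tB_1}u_0\neq e^{tB_2}u_0$ for a certain $t>0$. The two functions $e^{tB_i}u_0$ are g.s.\ of (\ref{e1}), (\ref{c1}) with the same data $u_0$, and both satisfy (\ref{En}) because the energy is nonincreasing along a contractive semigroup (as noted just before the theorem). So (\ref{En})-g.s.\ is not unique for this $u_0$.

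Conversely, assume $A$ is maximal, so $d_-=0$ or $d_+=0$. If $d_-=0$ there is nothing to do: by Theorem~\ref{th4} the g.s.\ of (\ref{e1}), (\ref{c1}) is unique even without (\ref{En}). So I concentrate on the case $d_+=0$, $d_->0$. Then $-A$ is $m$-dissipative; let $S_t=e^{-tA}$ be the contractive semigroup it generates. The semigroup g.s.\ is $\bar u(t)=S_tu_0$, it satisfies (\ref{En}), and by Theorem~\ref{th2}(i) each $S_t$ is an isometric embedding. Let $u(t)$ be an arbitrary g.s.\ of (\ref{e1}), (\ref{c1}) satisfying (\ref{En}); the goal is to prove $u(t)=\bar u(t)$ for all $t\ge 0$.

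The key step is to test the identity of Remark~\ref{rem1}(2) (with $A$, $X$ in place of $A_0$, $X_0$) against $f(t)=v(t)\theta_\nu(t-t_0)$, where $t_0>0$ is fixed, $\nu$ is so large that $1/\nu<t_0$, $\beta_\nu,\theta_\nu$ are the cut-offs from the proof of Theorem~\ref{th4}, and $v(t)=S_tv_0$ with $v_0\in D(A)$ (so $v\in C^1(\R_+,H)\cap C(\R_+,X)$ and $v'(t)=-Av(t)$, whence $f$ is an admissible test function). Since $v'+Av=0$, one gets $f'(t)+Af(t)=-v(t)\beta_\nu(t-t_0)$ and $f(0)=v_0$, so the identity collapses to $\int_{\R_+}(u(t),v(t))\beta_\nu(t-t_0)\,dt=(u_0,v_0)$. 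Letting $\nu\to\infty$ and using continuity of $(u(t),v(t))$ (weak continuity of $u$, strong continuity of $v$) yields $(u(t_0),S_{t_0}v_0)=(u_0,v_0)$ for every $v_0\in D(A)$, i.e.\ $S_{t_0}^{*}u(t_0)=u_0$. Since $S_{t_0}$ is isometric, $S_{t_0}^{*}S_{t_0}=E$, hence also $S_{t_0}^{*}\bar u(t_0)=u_0$, so $u(t_0)-\bar u(t_0)\in\ker S_{t_0}^{*}=(\Im S_{t_0})^{\perp}$, while $\bar u(t_0)=S_{t_0}u_0\in\Im S_{t_0}$. By orthogonality and the isometry of $S_{t_0}$, $\|u(t_0)\|^2=\|u_0\|^2+\|u(t_0)-\bar u(t_0)\|^2$; comparing with (\ref{En}), $\|u(t_0)\|^2\le\|u_0\|^2$, forces $u(t_0)=\bar u(t_0)$. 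Since $t_0>0$ is arbitrary and $u(0)=u_0=\bar u(0)$, we conclude $u\equiv\bar u$.

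I expect the last step to be the main obstacle. In contrast with Theorem~\ref{th4}, when $d_->0$ the operator $S_{t_0}$ is not surjective, so the relation $S_{t_0}^{*}u(t_0)=u_0$ extracted from the test-function computation does not by itself determine $u(t_0)$; the point is that it does fix the component of $u(t_0)$ in $\Im S_{t_0}$, and the energy inequality exactly excludes any component in $(\Im S_{t_0})^{\perp}$. A minor technical matter to check with care is that $f(t)=v(t)\theta_\nu(t-t_0)$ belongs to $C_0^1(\R_+,H)\cap C_0(\R_+,X)$ and that $v_0\in D(A)$ already yields the regularity of $v(t)=S_tv_0$ and the relation $v'=-Av$ used above.
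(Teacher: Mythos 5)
Your proof is correct and follows essentially the same route as the paper: the same reduction to the case $d_+=0$, the same test function $f(t)=\theta_\nu(t-t_0)e^{-tA}v_0$, and the same key identity $(u(t_0),e^{-t_0A}v_0)=(u_0,v_0)$. The only difference is the endgame: the paper lets $v_0\to u_0$ to get $(u(t_0),\tilde u(t_0))=\|u_0\|^2$ and invokes the equality case of the Cauchy--Schwarz inequality, whereas you read the identity as $S_{t_0}^{*}u(t_0)=u_0$ and conclude via the orthogonal decomposition along $\Im S_{t_0}$ and the Pythagorean theorem --- a cosmetic variation of the same argument.
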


\begin{proof}
By Theorem~\ref{th3} maximality of operator $A$ is necessary for uniqueness of a g.s. satisfying (\ref{En}).
To prove the sufficiency, assume that the skew-symmetric operator $A$ is maximal. Then at least one of its deficiency indexes $d_\pm$ is null. If $d_-=0$ then by Theorem~\ref{th4} uniqueness holds even in the wider class of whole g.s. and it only remains to treat the case $d_+=0$. In this case the operator $-A$ is $m$-dissipative and generates the contractive semigroup $u=e^{-tA}u_0$ of g.s. to problem (\ref{e1}), (\ref{c1}). Let $u(t)$ be a g.s. of problem (\ref{e1}), (\ref{c1}), (\ref{En}), $v\in D(A)$, $t_0>0$. $\nu\in\N$. We choose the test function $f(t)=\theta_\nu(t-t_0)v(t)$, where $v(t)=e^{-tA}v$ while the sequence $\theta_\nu=\int_t^{+\infty} \beta_\nu(s)ds$ was defined in the proof of Theorem~\ref{th4}. Then $f(t)\in C_0^1(\R_+,H)\cap C_0(\R_+,D(A))$, and $f'(t)+Af(t)=-v(t)\beta_\nu(t-t_0)$. It follows from relation (\ref{gs}) and Remark~\ref{rem1}(2) that for large enough $\nu\in\N$
\[
\int_{R_+} (u(t),v(t))\beta_\nu(t-t_0)dt=(u_0,v).
\]
This implies in the limit as $\nu\to\infty$ the equality $(u(t_0),v(t_0))=(u_0,v)$. Since $D(A)$ is dense in $H$ and  $v(t_0)=e^{-t_0A}v$ depends continuously on $v$, we can pass to the limit as $v\to u_0$ in the obtained equality. As a result, we arrive at the identity $(u(t),\tilde u(t))=\|u_0\|^2$ for all $t=t_0>0$ where $\tilde u(t)=e^{-tA}u_0$. Using the energy inequality (\ref{En}), we derive the relation
\[\|u_0\|^2=(u(t),\tilde u(t))\le\|u(t)\|\cdot\|\tilde u(t)\|\le\|u_0\|\cdot\|u_0\|=\|u_0\|^2,
\]
which implies the equalities
\[\|u(t)\|=\|\tilde u(t)\|=\|u_0\|, \quad (u(t),\tilde u(t))=\|u(t)\|\cdot\|\tilde u(t)\|. \]
Obviously, these equalities are possible only in the case when $u(t)=\tilde u(t)=e^{-tA}u_0$. Hence, a g.s. of (\ref{e1}), (\ref{c1}), (\ref{En}) is unique.
\end{proof}

\begin{remark}\label{rem3}
By Theorem~\ref{th5} in the case of maximal skew-symmetric operator $A$ a g.s. of (\ref{e1}), (\ref{c1}), (\ref{En}) is a semigroup g.s. If the operator $A$ is not maximal, this is not true anymore. Let us confirm this by the following simple example. Let $H=L^2([0,1])$, $Au=u'$, $u\in D(A)=\{ u=u(x)\in W_2^1([0,1]), \ u(1)=u(0)=0\}$. Obviously, $A$ is a closed skew symmetric operator with the deficiency indexes $d_\pm=1$. There are two skew-adjoint extensions of this operator
$A_1u=A_2u=u'$, defined in the domains $D(A_1)=\{ u=u(x)\in W_2^1([0,1]), \ u(1)=u(0)\}$, $D(A_2)=\{ u=u(x)\in W_2^1([0,1]), \ u(1)=-u(0)\}$. It is easy to realize that the corresponding orthogonal groups have the form
\[ e^{-tA_1}u(x)=u_p(x-t), \quad e^{-tA_2}u(x)=u_{ap}(x-t),
\]
where the functions $u_p(x)$, $u_{ap}(x)$ are, respectively, periodic ant anti-periodic extensions of the function $u(x)\in L^2([0,1])$ on the whole line, so that  $u_p(x+1)=u_p(x)$, $u_{ap}(x+1)=-u_{ap}(x)$ for all $x\in\R$, $u_p(x)=u_{ap}(x)=u(x)$ for $x\in [0,1]$. Let $u_0=u_0(x)\in H$, $u_0\not=0$. Then the functions $u_1(t)=e^{-tA_1}u_0$, $u_2(t)=e^{-tA_2}u_0$ are semigroup g.s. of (\ref{e1}), (\ref{c1}), which satisfy (\ref{En}) with the equality sign: $\|u_1(t)\|=\|u_2(t)\|=\|u_0\|$. The function \[u(t)=\frac{1}{2}(u_1(t)+u_2(t))=(u_{0p}(x-t)+u_{0ap}(x-t))/2\]
is a g.s. of the same problem and it satisfies inequality (\ref{En}) as well because
\[
\|u(t)\|\le\frac{1}{2}(\|u_1(t)\|+\|u_2(t)\|)=\|u_0\|.
\]
Evidently, the function $u(t)$ is $2$-periodic and $u(2k+1)=u(1)=(u_0(x)-u_0(x))/2=0$, $u(2k)=u(0)=u_0$ for all $k\in\N$.
Assuming that $u(t)=T_t u_0$ is a semigroup g.s., $T_t=e^{tB}$, we get the equality
$u_0=T_2u_0=T_1(T_1u_0)=T_1u(1)=T_10=0$ that contradicts to the assumption $u_0\not=0$. Hence, $u(t)$ is a g.s. of  (\ref{e1}), (\ref{c1}), (\ref{En}), which is not a semigroup g.s.
\end{remark}

\section{Some examples}

\subsection{A transport equation}\label{subsec1}
Let $a(x)=(a_1(x),\ldots,a_n(x)) \in L^2_{loc}(\R^n,\R^n)$ be a solenoidal vector in $\R^n$, that is,
\begin{equation}\label{sol}
\div a(x)=0 \ \mbox{ in } \D'(\R^n).
\end{equation}
We consider the transport equation (also called the continuity equation)
\begin{equation}\label{tre}
u_t+\div_x (au)=0,
\end{equation}
$u=u(t,x)$, $(t,x)\in\Pi=[0,+\infty)\times\R^n$. The notion of g.s. $u(t,x)\in L^2_{loc}([0,+\infty),L^2(\R^n))$ to the Cauchy problem for equation (\ref{tre}) with the initial condition
\begin{equation}\label{tri}
u(0,x)=u_0(x)\in L^2(\R^n)
\end{equation}
is defined by the standard integral identity (\ref{tr3}) below.

\begin{definition}\label{def2}
A function $u=u(t,x)\in L^\infty_{loc}([0,+\infty),L^2(\R^n))$ is called a g.s. of problem (\ref{tre}), (\ref{tri}) if for all $f=f(t,x)\in C_0^1(\Pi)$
\begin{equation}\label{tr3}
\int_{\Pi} u[f_t+a\cdot\nabla_x f]dtdx+\int_{\R^n} u_0(x)f(0,x)dx=0.
\end{equation}
In (\ref{tr3}) we use the notation $v\cdot w$ for the scalar product of vectors $v,w\in\R^n$.
\end{definition}

We introduce the Hilbert space $H=L^2(\R^n)$ and unbounded linear operator $A_0u=a(x)\cdot\nabla u(x)$ on $H$ with the domain  $D(A_0)=C_0^1(\R^n)\subset H$. This operator is skew-symmetric. In fact, for each $u,v\in C_0^1(\R^n)$
\begin{align*}(A_0u,v)_H+(u,A_0v)_H=\int_{\R^n} a(x)\cdot(v(x)\nabla u(x)+u(x)\nabla v(x))dx= \\ \int_{\R^n} a(x)\cdot\nabla (u(x)v(x))dx=0
\end{align*}
by solenoidality condition (\ref{sol}). Analyzing Definitions~\ref{def1},~\ref{def2}, we find that the notion of g.s. of the problem (\ref{tre}), (\ref{tri}) is consistent with the theory of g.s. to the abstract problem (\ref{e1}), (\ref{c1}).

We are going to demonstrate that in the case of Lipschitz coefficients $a_i(x)$, $i=1,\ldots,n$, the closure $A$ of the operator $A_0$ is a skew-adjoint operator. Thus, we assume that for some constant $m>0$
\begin{equation}\label{lip}
|a(x)-a(y)|\le m|x-y| \ \forall x,y\in\R^n.
\end{equation}
Here and everywhere below we use the notation $|z|$ for the
Euclidean norm of a finite-dimensional vector $z$ (including the modulus of a number).
By the Lipschitz condition (\ref{lip}) the generalized derivatives
$(a_i)_{x_j}(x)\in L^\infty(\R^n)$ for all $i,j=1,\ldots,n$.
In this case we can solve the problem (\ref{tre}), (\ref{tri}) by the classical method of characteristics.
Recall that characteristics of equation (\ref{tre}) are integral curves $(t,x(t))$ of the characteristic system of ODE
\begin{equation}\label{ch}
\dot x=a(x).
\end{equation}
In view of Lipschitz condition there exists a unique solution to a Cauchy problem for system (\ref{ch}). Besides, it also follows from the Lipschitz condition that the vector $a(x)$ may have at most linear growth at infinity:
\begin{equation}\label{subl}
|a(x)|\le c(1+|x|), \quad c=\const
\end{equation}
Therefore, solutions $x(t)$ of system (\ref{ch}) cannot reach infinity for finite time and therefore they are defined for all $t\in\R$. We denote by $x(t;t_0,x_0)$ the unique solution of system (\ref{ch}) satisfying the condition $x(t_0)=x_0$, and set $y(t_0,x_0)=x(0;t_0,x_0)$. From the relation $\frac{d}{dt} u(t,x(t))=(u_t+a\cdot\nabla_x u)(t,x(t))=0$ it follows the equality $u(t,x)=u_0(y(t,x))$. By the Liouville theorem the diffeomorphisms $x\to y(t,x)$ keep the Lebesgue measure in $\R^n$, which implies that the linear operators $T_t(u_0)=u_0(y(t,x))$ are orthogonal in $H=L^2(\R^n)$. It is also clear that these operators satisfy the group property $T_{t+s}=T_tT_s$, $t,s\in\R$, and the continuity condition $T_tu_0\to u_0$ in $H$ as $t\to 0$. Hence, $\{T_t\}_{t\in\R}$ is a $C_0$-group of orthogonal operators, and by the Stone theorem, (cf.  \cite[Chapter IX]{Ios},\cite[Theorem~4.7]{Kr}) $T_t=e^{tB}$, where the infinitesimal generator $B$ is a skew-adjoint operator. We will show that actually $B=-A$.

We will need some estimates of solutions to the stationary equation
\begin{equation}
\label{tras}
\phi+ha\cdot\nabla\phi=\psi
\end{equation}
with $h>0$. Let $M(x)=\|Da(x)\|$ be the operator norm of the $n\times n$ matrix $Da(x)=\left(\frac{\partial a_i(x)}{\partial x_j}\right)_{i,j=1}^n$. It readily follows from condition (\ref{lip}) that $M(x)\le m$. The following statements hold.

\begin{lemma}\label{lm1}
Let $\psi(x)\in C_0^1(\R^n)$ and $0<h<1/m$. Then there exists a solution $\phi(x)\in W_\infty^1(\R^n)$ of equation (\ref{tras}) such that
\begin{equation}\label{est}
|\phi(x)|\le c_1(1+|x|)^{-\alpha/h}, \quad |\nabla\phi(x)|\le c_2(1+|x|)^{-\alpha(1/h-m)},
\end{equation}
$c_1,c_2,\alpha$ are positive constants, moreover $\alpha=1/c$, where $c>0$ is the constant from condition (\ref{subl}).
\end{lemma}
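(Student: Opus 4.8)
The plan is to construct $\phi$ explicitly via the method of characteristics and then estimate the resulting integral using Gr\"onwall-type bounds along the flow. Recall that the stationary equation $\phi + h a\cdot\nabla\phi = \psi$ is the resolvent equation for the generator $B=-A$ of the transport group $T_t$; formally $\phi = (E+hA)^{-1}\psi = \frac1h\int_0^\infty e^{-s/h} T_{-s}\psi\,ds$ if we orient the flow correctly. Concretely, I would set
\begin{equation}\label{phiformula}
\phi(x)=\frac1h\int_0^{+\infty} e^{-s/h}\,\psi(x(s;0,x))\,ds,
\end{equation}
where $x(s;0,x)$ is the characteristic through $x$ at time $0$. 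Differentiating along the flow shows directly that $\phi$ solves \eqref{tras}: writing $g(s)=\psi(x(s;0,x))$ one has $a\cdot\nabla\phi(x)=\frac1h\int_0^\infty e^{-s/h} g'(s)\,ds$, and an integration by parts gives $\phi+ha\cdot\nabla\phi=\psi$. (One must check convergence of the integral first, which follows from the growth control below.)

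Next I would prove the decay estimate for $\phi$ itself. Since $\psi\in C_0^1$ has compact support, say $\supp\psi\subset\{|x|\le R\}$, the integrand in \eqref{phiformula} is nonzero only for those $s$ with $|x(s;0,x)|\le R$. The key geometric fact is a two-sided bound on the growth of $|x(s)|$ along characteristics coming from the sublinear growth \eqref{subl}: from $\dot x=a(x)$ and $|a(x)|\le c(1+|x|)$ one gets $\frac{d}{ds}(1+|x(s)|)\ge -c(1+|x(s)|)$, hence $(1+|x(s)|)\ge (1+|x|)e^{-cs}$, i.e. $1+|x(s;0,x)|\ge (1+|x|)e^{-cs}$ for $s\ge 0$. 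Therefore if $|x(s;0,x)|\le R$ then $e^{-cs}\le (1+R)/(1+|x|)$, i.e. $s\ge \frac1c\log\frac{1+|x|}{1+R}=\alpha\log\frac{1+|x|}{1+R}$ with $\alpha=1/c$. Restricting the integral \eqref{phiformula} to that range,
\begin{equation}\label{phiest}
|\phi(x)|\le \frac{\|\psi\|_\infty}{h}\int_{\alpha\log\frac{1+|x|}{1+R}}^{\infty} e^{-s/h}\,ds=\|\psi\|_\infty\Big(\frac{1+R}{1+|x|}\Big)^{\alpha/h},
\end{equation}
which is the asserted bound $|\phi(x)|\le c_1(1+|x|)^{-\alpha/h}$ (for $|x|$ large; for bounded $x$ the trivial bound $|\phi|\le\|\psi\|_\infty$ absorbs into the constant).

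For the gradient estimate I would differentiate \eqref{phiformula} in $x$: $\nabla\phi(x)=\frac1h\int_0^\infty e^{-s/h}\,(D_x x(s;0,x))^{\mathsf T}\nabla\psi(x(s;0,x))\,ds$, so I need to bound the Jacobian $J(s,x)=D_x x(s;0,x)$ of the flow. This matrix satisfies the linearized (variational) equation $\dot J = Da(x(s))\,J$, $J(0)=E$, and since $\|Da(x)\|=M(x)\le m$ everywhere, Gr\"onwall gives $\|J(s,x)\|\le e^{ms}$ for $s\ge 0$. Inserting this and again restricting to the range $s\ge\alpha\log\frac{1+|x|}{1+R}$ where $\nabla\psi$ is supported,
\begin{equation}\label{gradest}
|\nabla\phi(x)|\le\frac{\|\nabla\psi\|_\infty}{h}\int_{\alpha\log\frac{1+|x|}{1+R}}^{\infty} e^{-s/h}e^{ms}\,ds=\frac{\|\nabla\psi\|_\infty}{1-hm}\Big(\frac{1+R}{1+|x|}\Big)^{\alpha(1/h-m)},
\end{equation}
which converges precisely because $h<1/m$, and yields $|\nabla\phi(x)|\le c_2(1+|x|)^{-\alpha(1/h-m)}$. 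Finally, boundedness of $\phi$ and $\nabla\phi$ (hence $\phi\in W^1_\infty(\R^n)$) is immediate from \eqref{phiest}--\eqref{gradest} together with the obvious bounds near the origin; and $\phi$ is genuinely a (weak, in fact Lipschitz) solution of \eqref{tras} by the differentiation-along-characteristics computation.

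I expect the main obstacle to be purely technical bookkeeping rather than conceptual: one has to be careful about the orientation of the flow (whether $\phi$ is built from $T_s$ or $T_{-s}$, i.e. which sign of $s$ makes the resolvent integral converge — this is dictated by wanting $e^{-s/h}$ to beat the exponential growth $e^{ms}$ of the Jacobian), and one must justify differentiating under the integral sign in \eqref{phiformula}, which is legitimate because the characteristic flow $(s,x)\mapsto x(s;0,x)$ is $C^1$ in $x$ (Lipschitz $a$ with bounded $Da\in L^\infty$ gives $C^{1}$ dependence on initial data, or at least Lipschitz dependence with the stated Jacobian bound, which suffices for the a.e. gradient estimate and the $W^1_\infty$ conclusion). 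The exponents $\alpha/h$ and $\alpha(1/h-m)$ fall out automatically from the competition between the exponential decay rate $1/h$ of the resolvent kernel, the exponential contraction rate $c=1/\alpha$ of $1+|x(s)|$ backward along the flow, and the exponential growth rate $m$ of the variational flow.
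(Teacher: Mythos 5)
Your construction is the same as the paper's --- an explicit resolvent formula obtained by solving the ODE $\phi+h\phi'=\psi$ along characteristics, a support/growth argument based on (\ref{subl}) for the decay of $\phi$, and a Gr\"onwall bound on the flow Jacobian for the decay of $\nabla\phi$ --- but your formula has the flow oriented the wrong way, and as a consequence the function you build solves $\phi-h\,a\cdot\nabla\phi=\psi$ rather than (\ref{tras}). Concretely, with $\phi(x)=\frac1h\int_0^{\infty}e^{-s/h}\psi(x(s;0,x))\,ds$ and $g(s)=\psi(x(s;0,x))$, the group property of the flow gives $a\cdot\nabla\phi(x)=\frac1h\int_0^\infty e^{-s/h}g'(s)\,ds$, and integration by parts yields
\[
\int_0^\infty e^{-s/h}g'(s)\,ds=-g(0)+\frac1h\int_0^\infty e^{-s/h}g(s)\,ds=-\psi(x)+\phi(x),
\]
so $h\,a\cdot\nabla\phi=\phi-\psi$, i.e.\ $\phi-h\,a\cdot\nabla\phi=\psi$: the boundary term and the remaining integral combine to give the opposite sign from the one you assert. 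You correctly flagged the orientation of the flow as the delicate point and then chose it incorrectly; to get $\phi+h\,a\cdot\nabla\phi=\psi$ you must integrate along the backward flow, $\phi(x)=\frac1h\int_0^{\infty}e^{-s/h}\psi(x(-s;0,x))\,ds=\frac1h\int_{-\infty}^0e^{\sigma/h}\psi(x(\sigma;0,x))\,d\sigma$, which is exactly the paper's representation (\ref{repr1}).

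The repair costs nothing downstream: the two inequalities you rely on, namely $1+|x(s;0,x)|\ge(1+|x|)e^{-c|s|}$ (from (\ref{subl})) and $\|D_yx(s;0,y)\|\le e^{m|s|}$ (from Gr\"onwall and (\ref{lip})), are symmetric in the sign of $s$, so after replacing $x(s;0,x)$ by $x(-s;0,x)$ every estimate goes through verbatim and you recover the paper's constants $c_1=\|\psi\|_\infty(1+R)^{\alpha/h}$ and $c_2=\frac{\|\nabla\psi\|_\infty}{1-mh}(1+R)^{\alpha(1/h-m)}$. One further small point: for merely Lipschitz $a$ the variational equation $\dot J=Da(x(s))J$ is awkward to justify, since $Da$ is only defined almost everywhere and need not be defined along a given trajectory; the paper instead obtains $|x(s;0,y_2)-x(s;0,y_1)|\le|y_2-y_1|e^{m|s|}$ directly from the integral equation by Gr\"onwall and then reads off the a.e.\ Jacobian bound --- this is the fallback you yourself mention, and it is the cleaner route here.
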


\begin{proof}
On a characteristic $x=x(t)$ equation (\ref{tras}) turns to ODE  $\phi+h\phi'=\psi$ where $\phi=\phi(x(t))$, $\psi=\psi(x(t))$. A particular solution of this equation is given by the expression
\[
\phi(t)=\frac{1}{h}\int_{-\infty}^t e^{(s-t)/h}\psi(x(s))ds.
\]
Taking $x(s)=x(s;t,y)$, we arrive at the representation
\[
\phi(y)=\frac{1}{h}\int_{-\infty}^t e^{(s-t)/h}\psi(x(s;t,y))ds=\frac{1}{h}\int_{-\infty}^t e^{(s-t)/h}\psi(x(s-t;0,y))ds.
\]
Making the change $s-t\to s$, we obtain the equality
\begin{equation}\label{repr1}
\phi(y)=\frac{1}{h}\int_{-\infty}^0 e^{s/h}\psi(x(s;0,y))ds.
\end{equation}
Since the function $\psi(x)$ is bounded and continuous, the integral in (\ref{repr1}) converges uniformly with respect to  $y\in\R^n$, consequently, $\phi(y)$ is continuous. We choose such $r>0$ that the support $\supp\psi(x)$ lies in the ball $|x|\le r$. Let $|y|>r$ and $x(s)=x(s;0,y)$. Since $x'(s)=a(x(s))$ then, with the help of (\ref{subl}), we obtain that
\[
\frac{d}{ds}|x(s)|\le |a(x(s))|\le c(1+|x(s)|).
\]
Therefore, $\frac{d}{ds}\ln(1+|x(s)|)\le c$ and after integration on $[s,0]$ we obtain the inequality
\[
\ln\left(\frac{1+|y|}{1+|x(s)|}\right)\le c|s|=-cs.
\]
It follows from this inequality that $|x(s)|>r$ for $s>s(y)\doteq-\frac{1}{c}\ln((1+|y|)/(1+r))$ and therefore  $\psi(x(s))=0$. Then, by representation (\ref{repr1})
\begin{align*}
|\phi(y)|=\left|\frac{1}{h}\int_{-\infty}^{s(y)} e^{s/h}\psi(x(s;0,y))ds\right|\le \frac{\|\psi\|_\infty}{h}\int_{-\infty}^{s(y)} e^{s/h}ds=\\ \|\psi\|_\infty e^{s(y)/h}=c_1(1+|y|)^{-\alpha/h},
\end{align*}
where $\alpha=1/c$, $c_1=\|\psi\|_\infty(1+r)^{\alpha/h}$. The obtained inequality remains true for $|y|\le r$ as well, because for such $y$
\[
|\phi(y)|\le \frac{\|\psi\|_\infty}{h}\int_{-\infty}^0 e^{s/h}ds=\\ \|\psi\|_\infty\le c_1(1+|y|)^{-\alpha/h}.
\]
The first of estimates (\ref{est}) is proved.

Then we notice that for $y_1,y_2\in\R^n$, $s<0$
\begin{align*}
|x(s;0,y_2)-x(s;0,y_1)|\le |y_2-y_1|+\int_s^0 |x'(t;0,y_2)-x'(t;0,y_1)|dt= \\
|y_2-y_1|+\int_s^0 |a(x(t;0,y_2))-a(x(t;0,y_1))|dt \le \\ |y_2-y_1|+m\int_s^0 |x(t;0,y_2)-x(t;0,y_1)|dt.
\end{align*}
By Gr\"{o}nwall's lemma we derive from this relation the estimate $|x(s;0,y_2)-x(s;0,y_1)|\le |y_2-y_1|e^{m|s|}$.
We find that the map $y\to x(s;0,y)$ is Lipschitz with the constant $e^{m|s|}$. Therefore, its generalized derivatives are bounded and the operator norm of the Jacobian matrix
$X(s)=D_yx(s;0,y)$ is bounded by $e^{m|s|}$: $\|X(s)\|\le e^{m|s|}$. It follows from the equality  $\nabla_y\psi(x(s;0,y))=X(s)^\top\nabla_x \psi(x(s;0,y))$ (valid for almost each $(s,y)$) that
\begin{equation}\label{est1}
|\nabla_y\psi(x(s;0,y))|\le \|\nabla\psi\|_\infty e^{m|s|}.
\end{equation}
Representation (\ref{repr1}) implies that for a.e. $y\in\R^n$
\begin{equation}\label{repr2}
\nabla\phi(y)=\frac{1}{h}\int_{-\infty}^0 e^{s/h}\nabla_y\psi(x(s;0,y))ds.
\end{equation}
By estimate (\ref{est1}) for $0<h<1/m$ the integral in the right-hand side of this equality converges uniformly with respect to $y\in\R^n$. As we have already established, for $0>s>s(y)=-\frac{1}{c}\ln((1+|y|)/(1+r))$ (if $|y|\le r$ we set
$s(y)=0$) the function $\psi(x(s;0,y))$ equals zero, consequently its gradient $\nabla_y\psi(x(s;0,y))$ is zero as well. Therefore,
\[
\nabla\phi(y)=\frac{1}{h}\int_{-\infty}^{s(y)} e^{s/h}\nabla_y\psi(x(s;0,y))ds,
\]
which implies the estimate
\begin{align*}
|\nabla\phi(y)|\le\frac{1}{h}\int_{-\infty}^{s(y)} e^{s/h}|\nabla_y\psi(x(s;0,y))|ds\le
\|\nabla\psi\|_\infty h^{-1}\int_{-\infty}^{s(y)} e^{(1/h-m)s}ds= \\ \frac{\|\nabla \psi\|_\infty}{1-mh}e^{(1/h-m)s(y)}=c_2(1+|y|)^{-\alpha(1/h-m)},
\end{align*}
where, as before, $\alpha=1/c$, and $c_2=\frac{\|\nabla\psi\|_\infty}{1-mh}(1+r)^{\alpha(1/h-m)}$. Hence, we derive the second estimate in (\ref{est}) (after replacing $y$ by $x$).
The proof is complete
\end{proof}
We need also the fact that finite functions from the space $W_2^1(\R^n)$ lie in the domain of $A$.

\begin{lemma}\label{lm1a}
Let $u(x)\in W_2^1(\R^n)$ be a function with compact support. Then $u(x)\in D(A)$ and $(Au)(x)=a(x)\cdot\nabla u(x)$.
\end{lemma}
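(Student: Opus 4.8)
The plan is to approximate $u$ by smooth compactly supported functions via mollification and then invoke the definition of $A$ as the closure of $A_0$.

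First I would fix $R>0$ with $\supp u\subset\{|x|<R\}$ and set $u_k=u*\rho_{1/k}$, where $\rho_\varepsilon(x)=\varepsilon^{-n}\rho(x/\varepsilon)$ is a standard nonnegative mollifier. Then each $u_k\in C_0^\infty(\R^n)\subset C_0^1(\R^n)=D(A_0)$, and all the $u_k$, $k\ge 1$, are supported in the fixed ball $\{|x|\le R+1\}$. Standard properties of mollification give $u_k\to u$ in $L^2(\R^n)$, and, since $u\in W_2^1(\R^n)$ so that $\nabla u\in L^2(\R^n)$ and $\nabla u_k=(\nabla u)*\rho_{1/k}$, also $\nabla u_k\to\nabla u$ in $L^2(\R^n)$.

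The key point is then that the Lipschitz hypothesis (\ref{lip}) forces $a$ to be continuous, hence bounded on the compact ball $\{|x|\le R+1\}$, say $|a(x)|\le C$ there. Since $\nabla u_k-\nabla u$ and $\nabla u$ are all supported in that ball,
\[
\|A_0u_k-a\cdot\nabla u\|_H=\|a\cdot(\nabla u_k-\nabla u)\|_{L^2(\{|x|\le R+1\})}\le C\,\|\nabla u_k-\nabla u\|_{L^2}\to 0 .
\]
In particular $a\cdot\nabla u\in L^2(\R^n)$, and we have produced a sequence $u_k\in D(A_0)$ with $u_k\to u$ and $A_0u_k\to a\cdot\nabla u$ in $H$. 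By the very definition of the closure $A$ of $A_0$ this gives $u\in D(A)$ and $Au=a\cdot\nabla u$.

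There is no real obstacle here: the argument is a routine mollification. The only place requiring a little care is the convergence $A_0u_k\to a\cdot\nabla u$ in $L^2$; for merely $L^2_{loc}$ solenoidal $a$ this could fail, and it is exactly the local boundedness of $a$ supplied by the Lipschitz assumption, together with the fact that all the functions involved live in one fixed ball, that makes it work.
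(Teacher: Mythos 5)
Your argument is correct and is essentially the same as the paper's: mollify $u$ to get $u_k\in C_0^1(\R^n)$ with supports in a fixed ball, use the local boundedness of $a$ (which the paper extracts from the linear-growth bound (\ref{subl}), itself a consequence of (\ref{lip})) to get $A_0u_k\to a\cdot\nabla u$ in $L^2$, and conclude via the definition of $A$ as the closure of $A_0$. No issues.
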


\begin{proof}
We take a sequence $u_k(x)\in C_0^1(\R^n)$ such that $u_k\to u$ as $k\to\infty$ in $W_2^1(\R^n)$ and that
supports $\supp u_k$ are contained in some common ball $|x|\le R$. For instance we can take the sequence of averaged functions $u_k(x)=k^n\int_{\R^n}u(x-y)\rho(ky)dy$ with a kernel $\rho(z)\in C_0^1(\R^n)$, $\rho(z)\ge 0$, $\int_{\R^n}\rho(z)dz=1$. Then $u_k\in D(A_0)$ and as $k\to\infty$
\[ u_k\to u, \quad A_0u_k=a(x)\cdot\nabla u_k(x)\to a(x)\cdot\nabla u(x) \ \mbox{ in } H,\]
where we take into account that $|a(x)|\le c(1+R)$ in the ball $|x|\le R$ by (\ref{subl}). Since $A$ is a closure of  the operator $A_0$ we conclude that $u\in D(A)$ and  $Au=a(x)\cdot\nabla u(x)$. The proof is complete.
\end{proof}

Now we are ready to prove that $A$ is a skew-adjoint operator.

\begin{proposition}\label{pro1}
The operator $A$ is skew-adjoint.
\end{proposition}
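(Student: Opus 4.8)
The plan is to show that the skew-adjoint generator $B$ of the orthogonal group $T_t = e^{tB}$ (coming from the method of characteristics) coincides with $-A$. Since $A$ is skew-symmetric and closed, we have $-A \subset A^*$; if we can show $-A \subset B$ and also that $B \subset -A$, we conclude $B = -A$, hence $A = -B$ is skew-adjoint because $B$ is. Actually, the cleaner route: both $A$ and $B$ are closed skew-symmetric operators (the latter being skew-adjoint), and it suffices to prove $-A \subset B$ together with $-B \subset A$ — or, more economically, that $-A$ and $B$ have the same domain. Let me structure it as: (1) show $-A \subset B$; (2) show $B \subset A^*$ combined with a resolvent-surjectivity argument forces equality; then skew-adjointness of $A$ follows since $A^* = -B = A$.

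First I would establish $-A \subset B$. Let $u \in D(A_0) = C_0^1(\R^n)$. Then $v(t) = T_t u = u(y(t,x))$ is differentiable in $t$ with $\frac{d}{dt} T_t u = -a\cdot\nabla (T_t u) = T_t(-a\cdot\nabla u)$ at $t=0$ (using that the flow is smooth enough for compactly supported $C^1$ data and the chain rule, together with the linear-growth bound (\ref{subl}) to control things on the relevant compact set). Hence $Bu = -a\cdot\nabla u = -A_0 u$ for $u \in D(A_0)$, so $-A_0 \subset B$; since $B$ is closed and $A$ is the closure of $A_0$, we get $-A \subset B$. Consequently $-B \subset A^*$... but more to the point, since $B$ is skew-adjoint, $-B = B^* = B^{**} \supset (-A)^* \supset -A$, consistent; the useful fact is just $-A \subset B$, equivalently $B^* = -B \subset A^*$, i.e. $-B \subset A^*$.

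The main step — and the main obstacle — is the reverse inclusion, i.e. showing $D(B) \subset D(A)$, equivalently $\Im(E - A) = H$ (since then $A$, being skew-symmetric with $d_+ = 0$ on one side, and by the symmetric estimate $\Im(E+A) = H$, forces $A$ to be $m$-dissipative both ways, hence skew-adjoint, and then $-A \subset B$ with both $m$-dissipative gives $-A = B$). Here is where Lemma~\ref{lm1} enters: given $\psi \in C_0^1(\R^n)$ and $0 < h < 1/m$, it produces $\phi \in W_\infty^1(\R^n)$ solving $\phi + h a\cdot\nabla\phi = \psi$ with the decay estimates (\ref{est}); for $h$ small enough the decay exponents $\alpha/h$ and $\alpha(1/h - m)$ exceed $n/2$, so $\phi \in W_2^1(\R^n)$, and $\phi$ has — well, not compact support, but fast decay. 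To apply Lemma~\ref{lm1a} I would first truncate: approximate $\phi$ by compactly supported $W_2^1$ functions $\phi_k = \phi \chi_k$ with suitable cutoffs, check $\phi_k \to \phi$ and $a\cdot\nabla\phi_k \to a\cdot\nabla\phi$ in $H$ (the cross term $\phi \, a\cdot\nabla\chi_k$ goes to zero by the decay of $\phi$ against the linear growth of $a$), so by Lemma~\ref{lm1a} and closedness of $A$ we get $\phi \in D(A)$ with $A\phi = a\cdot\nabla\phi$, hence $(E + hA)\phi = \psi$. This shows $C_0^1(\R^n) \subset \Im(E + hA)$ for small $h > 0$; since $A$ is closed, $\Im(E + hA)$ is closed, and $C_0^1$ is dense in $H$, so $\Im(E + hA) = H$. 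Replacing $a$ by $-a$ (equally Lipschitz and solenoidal), the same argument gives $\Im(E - hA) = H$.

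To finish: $\Im(E - hA) = H$ means $A$ is $m$-dissipative (it is dissipative, being skew-symmetric, and the range condition holds for one $h > 0$), hence $-A = B$ by maximality — actually directly, $\Im(E - hA) = H = \Im(E + hA)$ forces both deficiency indices $d_\pm(A) = 0$ (a vector in $(\Im(E \mp A))^\perp = \ker(E \pm A^*)$ must be zero since the ranges are everything, after rescaling $h \to 1$ via the substitution $A \to A$; note $\Im(E - hA) = H$ for one $h$ gives $\Im(E - A) = H$ by a standard resolvent argument, or just run Lemma~\ref{lm1} with the normalization absorbed). Therefore $A$ is skew-adjoint by the characterization $d_+ = d_- = 0$ recalled in Section~3. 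I expect the truncation/decay bookkeeping in passing from the $W_\infty^1$ solution of Lemma~\ref{lm1} to membership in $D(A)$ to be the only genuinely delicate point; everything else is assembling already-proven facts.
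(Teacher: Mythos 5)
Your proposal is correct and follows essentially the same route as the paper: solve the resolvent equation via Lemma~\ref{lm1}, truncate the decaying solution $\phi$ by rescaled cutoffs so that Lemma~\ref{lm1a} and the closedness of $A$ give $\phi\in D(A)$ with $A\phi=a\cdot\nabla\phi$, then conclude $\Im(E\pm hA)=H$ and hence $d_+=d_-=0$. The only adjustments are minor: you need the slightly stronger condition $\alpha/h>1+n/2$ (not just $>n/2$) to guarantee $|a|\phi\in L^2$ so that the cross term $\phi\,a\cdot\nabla\chi_k$ tends to zero in $L^2$, and the preliminary discussion of the group generator $B$ is not needed for the proposition itself (the paper records $B=-A$ only afterwards, as a consequence).
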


\begin{proof}
Let $h>0$ be so small that $\alpha(1/h-m)>1+n/2$ and $\psi(x)\in C_0^1(\R^n)$. By Lemma~\ref{lm1} there exists a function  $\phi(x)\in W_\infty^1(\R^n)$, which solves equation (\ref{tras}) and satisfies estimates (\ref{est}).
Let us show that $\phi(x)\in D(A)$ and that $\phi+hA\phi=\psi$.
First of all we remark that $\alpha/h>1+n/2$ and it follows from (\ref{est}) and (\ref{subl}) that $\phi(x), |a(x)|\phi(x)\in H=L^2(\R^n)$. We choose such a function $\rho(y)\in C_0^1(\R^n)$ that $\rho(y)\ge 0$ and $\rho(0)=1$. By Lemma~\ref{lm1a} for all $k\in\N$ the functions $u_k\doteq\phi(x)\rho(x/k)\in D(A)$ and $Au_k=a(x)\cdot\nabla u_k(x)$. Obviously, $u_k\to\phi$ in $H$ as $k\to\infty$.
Further, the functions
\begin{equation}\label{f1}
v_k\doteq Au_k=(a(x)\cdot\nabla\phi(x))\rho(x/k)+k^{-1}\phi(x)a(x)\nabla_y\rho(x/k).
\end{equation}
Since $a(x)\cdot\nabla\phi(x)=(\psi(x)-\phi(x))/h\in H$ then
\[
(a(x)\cdot\nabla\phi(x))\rho(x/k)\mathop{\to}_{k\to\infty} a(x)\cdot\nabla\phi(x)
\]
a.e. in $\R^n$, and therefore in $H$. Using that
\[
|\phi(x)a(x)\nabla_y\rho(x/k)|\le\|\nabla\rho\|_\infty|a(x)||\phi(x)|\in H,
\]
we find that second term in (\ref{f1}) converges to zero in $H$ as $k\to\infty$. Thus, we established that in the limit as $k\to\infty$
$u_k\to\phi$, $v_k\to a(x)\cdot\nabla\phi(x)$ in $H$. Since the operator $A$ is closed, we conclude that
$\phi\in D(A)$ and $A\phi(x)=a(x)\cdot\nabla\phi(x)=(\psi(x)-\phi(x))/h$. In particular, $\phi+hA\phi=\psi$ and therefore  $\psi\in\Im(E+hA)$. But $\psi(x)\in C_0^1(\R^n)$ is arbitrary and we see that $C_0^1(\R^n)\subset\Im(E+hA)$ (notice also that the constant $\alpha=1/c$ and the above choice of the parameter $h$ does not depend on $\psi$). Remind that the subspace $\Im(E+hA)$ is closed in $H$ and since $C_0^1(\R^n)$ is dense in $H$ we obtain that $\Im(E+hA)=H$. By skew-symmetricity of $A$ the property $\Im(E+hA)=H$ holds for all $h>0$. Similarly we prove that $\Im(E-hA)=H$ (for that we can just repeat our arguments replacing the field $a(x)$ by  $-a(x)$).
Thus, the deficiency indexes of the operator $A$ are both zero, that is, this operator is skew-adjoint. The proof is complete.
\end{proof}

According to Theorems~\ref{th2},~\ref{th4}, existence and uniqueness of g.s. to problem (\ref{tre}), (\ref{tri}) follow from Proposition~\ref{pro1}. Moreover, $-A$ is an infinitesimal generator of the semigroup $T_t(u_0)(x)=u_0(y(t,x))$ of g.s., an we obtain the announced equality $B=-A$. Uniqueness of g.s. to the problem (\ref{tre}), (\ref{tri}) (both forward and backward) was also established in the case when the coefficients have Sobolev regularity in the paper by R.J.~DiPerna and P.L.~Lions  \cite{DiL}, later these results were extended to more general case of BV coefficients, see paper \cite{Amb}. According to Corollary~\ref{cor1} in these cases the operator $A$ is skew-adjoint. But, in general case there are numerous examples of non-uniqueness of g.s. to problem (\ref{tre}), (\ref{tri}), see for instance \cite{Aiz,Br,CLR,Dep,PaTr}, so that the operator $A$ may fail to be skew-adjoint.

Notice that for a bounded vector of coefficients $a(x)$ the statement of Corollary~\ref{cor1}, applied to problem (\ref{tre}), (\ref{tri}), follows from results of \cite{BoCr}, see also \cite{ufa,prep}.

\medskip
We underline that the global Lipschitz condition is essential for the statement of Proposition~\ref{pro1}. In fact, let 
$n=2$, $a=a(x,y)=(x^2,-2xy)$ (notice that $\div a=0$, as required), $A$ be a closure of the operator $A_0u=x^2u_x-2xyu_y$. As is easy to verify, for each $v(z)\in C_0^\infty(\R)$ the functions
\begin{equation}\label{ex1}
u(x,y)=\left\{\begin{array}{lcr} v(yx^2)e^{\frac{1}{hx}} & , & hx<0, \\ 0 & , & hx\ge 0 \end{array}\right.
\end{equation}
are solutions of the resolvent equation $u-hA^*u=0$, $h\in\R\setminus\{0\}$. Moreover, 
$u(x,y)\in C^\infty(\R^2)\cap L^\infty(\R^2)\cap L^2(\R^2)$, $\|u\|_\infty=\|v\|_\infty$, $\|u\|_2=\|v\|_2\sqrt{|h|/2}$.
By the arbitrariness of $v(z)$, the operators $E-hA^*$ have infinite-dimensional kernels. Therefore, $\codim\Im(E-hA)=\dim\ker(E-hA^*)=\infty$ and the operator $A$ has the same infinite deficiency indexes. Hence, this operator is not skew-adjoint (but it admits a skew-adjoint extension). Notice that the functions (\ref{ex1}) do not belong to $D(A)$ despite their smoothness, otherwise, they satisfy the equality $u+hAu=0$, which contradicts to injectivity of the operators $E+hA$.

For the modified coefficients $a=a(x,y)=((x_+)^2,-2x_+y)$, with $x_+=\max(x,0)$, functions
(\ref{ex1}) lie in $\ker(E-hA^*)$ only for $h<0$. If $h>0$, this kernel is trivial. This means that the deficiency indexes $d_\pm$ of the operator $A$ are as follows: $d_-=0$, $d_+=\infty$. In particular, $A$ is a maximal skew-symmetric operator, which is not skew-adjoint.

\subsection{Linearized Euler system}\label{subsec2}
We consider the linearized Euler system with the same solenoidal vector of coefficients $a(x)\in L^2_{loc}(\R^n,\R^n)$ as in the previous section~\ref{subsec1}:
\begin{equation}\label{Es}
u_t+\sum_{j=1}^n (a_j(x)u)_{x_j}+\nabla p=0, \quad \div_x u=0,
\end{equation}
where $(t,x)\in\Pi$, $u=(u^1(t,x),\ldots,u^n(t,x))\in L^\infty_{loc}([0,+\infty),H)$, $H\subset L^2(\R^n,\R^n)$ is a Hilbert space of solenoidal vector fields in $\R^n$, $p=p(t,x)$ is a pressure. We study the Cauchy problem with the initial condition
\begin{equation}\label{Esi}
u(0,x)=u_0(x)\in H.
\end{equation}

\begin{definition}\label{def3}
A vector $u=u(t,x)\in L^\infty_{loc}([0,+\infty),H)$ is called a g.s. of problem (\ref{Es}), (\ref{Esi}) if for each test
vector-function $f=f(t,x)\in C_0^1(\Pi,\R^n)$ such that $\div_x f=0$
\[
\int_{\Pi} [u\cdot f_t+ u\cdot (a\cdot\nabla_x) f]dtdx+\int_{\R^n} u_0(x)\cdot f(0,x)dx=0.
\]
\end{definition}
The choice of solenoidal test vectors allows to exclude the pressure $p=p(t,x)$ from the system.
Definition~\ref{def3} is consistent with Definition~\ref{def1} if we define the operator $A_0$ by the equality $A_0 u=Pv$, where $u\in C_0^1(\R^n,\R^n)\cap H=D(A_0)$,
$\displaystyle v=v(x)=\sum_{j=1}^n a_j(x)u_{x_j}(x)$ while $P:L^2(\R^n,\R^n)\to H$ is an orthogonal projector of the space $L^2(\R^n,\R^n)$ onto its closed linear subspace $H$. If $u_1=(u_1^k)_{k=1}^n,u_2=(u_2^k)_{k=1}^n\in D(A_0)$ then
\begin{align*}
(A_0u_1,u_2)+(u_1,A_0u_2)=(Pv_1,u_2)+(u_1,Pv_2)=(v_1,u_2)+(u_1,v_2)=\\
\int_{\R^n}\sum_{j,k=1}^n a_j(x)((u_1^k)_{x_j}u_2^k+u_1^k(u_2^k)_{x_j})(x)dx=
\int_{\R^n}\sum_{j,k=1}^n a_j(x)(u_1^k u_2^k)_{x_j}dx=0
\end{align*}
by condition (\ref{sol}). Thus, the operator $A_0$ is skew-symmetric.

We are going to show that in the case of Lipschitz coefficients $a_j(x)$, $j=1,\ldots,n$, the operator $A_0$ admits a skew-adjoint extension. For that we need to restore the term $\nabla p$ from the equality
\[
\div\left(\sum_{j=1}^n (a_j u)_{x_j}+\nabla p\right)=0.
\]
Taking into account that $\div u=0$ for $u=(u^1,\ldots,u^n)\in H$, we obtain the relation
\[
\sum_{k,j=1}^n ((a_j)_{x_k}(x)u^k)_{x_j}+\Delta p=0,
\]
understood in the sense of distributions. Applying the Fourier transform, we arrive at the equality
\begin{equation}\label{ei1}
\tilde p(\xi)=\sum_{j=1}^n \F\left(\sum_{k=1}^n(a_j)_{x_k}u^k\right)(\xi)\frac{i\xi_j}{|\xi|^2}.
\end{equation}
Here $i$ is the imaginary unit, $\tilde u(\xi)=\F(u)(\xi)$ is the Fourier transform of a function (more generally, a distribution) $u(x)$, which is defined for $u\in L^1(\R^n)$ by the standard equality
\[
\tilde u(\xi)=\F(u)(\xi)=(2\pi)^{-n/2}\int_{\R^n}e^{-i\xi\cdot x}u(x)dx.
\]
Then we find $\F(\nabla p)(\xi)=i\xi\tilde p(\xi)$ and it follows from (\ref{ei1}) that
\begin{equation}\label{ei2}
\F(\nabla p)^l(\xi)=-\sum_{j=1}^n\F\left(\sum_{k=1}^n(a_j)_{x_k}u^k\right)(\xi)\frac{\xi_l\xi_j}{|\xi|^2}.
\end{equation}
Since the functions $(a_j)_{x_k}(x)$, $\xi_l\xi_j/|\xi|^2$ are bounded while the Fourier transform is an unitary linear operator in $L^2(\R^n)$, we conclude that $\nabla p=Tu$ is a bounded linear operator in $L^2(\R^n,\R^n)$. By the construction $A_0u=\sum_{j=1}^n (a_j(x)u)_{x_j}+Tu$ for $u(x)\in D(A_0)\subset H$. Let us define the operator $B_0$ in
$L^2(\R^n,\R^n)$ according to the equalities $(B_0u)^l=\sum_{j=1}^n a_j(x)u^l_{x_j}$, $l=1,\ldots,n$, with the domain $D(B_0)=C_0^1(\R^n,\R^n)$. The operator $B_0$ is a direct sum of $n$ scalar transport operators considered in the previous section. By Proposition~\ref{pro1} the closure $B$ of operator $B_0$ is a skew-adjoint operator. The operator $\tilde A_0=B_0+T$ is defined on $D(B_0)$ and it is an extension of the operator $A_0$. Since $T$ is a bounded operator, the closure $\tilde A$ of the operator $\tilde A_0$ coincides with $B+T$.
Recall that the operator $B$ is skew-adjoint, therefore $\Im(E-hB)=L^2(\R^n,\R^n)$ and $\|(E-hB)^{-1}\|\le 1$ for all $h\in\R$. Since $\tilde A=B+T$ is a bounded perturbation of $B$, then the property $\Im(E-h\tilde A)=L^2(\R^n,\R^n)$ remains valid if $|h|$ is sufficiency small.

Our next step is to demonstrate that the space $H$ is invariant for the resolvent $(E-h\tilde A)^{-1}$, where $|h|<h_0$ is small enough.
Without loss of generality, we may suppose that $h>0$. The case of a negative $h$ is treated using the change
$a(x)$ by $-a(x)$.

\begin{lemma}\label{lm2}
Let $u\in D(\tilde A)$, $u-h\tilde Au=v\in H$, and $h>0$ be small enough. Then $u\in H$, i.e., $\div u=0$ in $\D'(\R^n)$.
\end{lemma}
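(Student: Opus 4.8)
The plan is to show that $w \doteq \div u \in \D'(\R^n)$ vanishes by deriving a scalar transport equation for $w$ and then invoking the uniqueness already available for the scalar problem. Starting from $u - h\tilde A u = v$ with $\tilde A = B + T$, I take the divergence of both sides. Since $v \in H$ means $\div v = 0$, and since $\div(B u)^l$ relates to $\div u$ after commuting $\div$ with the transport operators, I expect to get an equation of the form
\[
w - h\, a(x)\cdot\nabla w = 0 \quad \text{in } \D'(\R^n),
\]
i.e.\ $w \in \Im(E - h\tilde A_{\mathrm{scalar}})^{\perp}$-type relation — more precisely $w$ solves the homogeneous stationary scalar equation $w + h(-a)\cdot\nabla w = 0$. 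The key computation is the commutator identity $\div\bigl(\sum_j a_j u_{x_j}\bigr) = \sum_j a_j (\div u)_{x_j} + \sum_{j,k}(a_j)_{x_k} u^k_{x_j}$, and the whole point of reintroducing $\nabla p = Tu$ was precisely to cancel the cross term $\sum_{j,k}(a_j)_{x_k}u^k_{x_j}$: indeed $\div(\nabla p) = \Delta p$ was defined so that $\Delta p = -\sum_{j,k}((a_j)_{x_k}u^k)_{x_j} = -\sum_{j,k}(a_j)_{x_k}u^k_{x_j} - \sum_{j,k}(a_j)_{x_k x_j}u^k$. I will need to check that, modulo the solenoidality $\div u = 0$ being replaced by the unknown $w$, the surviving terms indeed assemble into $w - h a\cdot\nabla w = 0$; the $(a_j)_{x_k x_j}u^k$ discrepancy should vanish because $\sum_j (a_j)_{x_j} = \div a = 0$ implies $\sum_{j,k}(a_j)_{x_k x_j}u^k = \sum_k (\div a)_{x_k} u^k = 0$.

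**Next I would** make rigorous the passage from the distributional identity $w - h a\cdot\nabla w = 0$ to $w \equiv 0$. The cleanest route is duality: for any $\psi \in C_0^1(\R^n)$, by Lemma~\ref{lm1} (applied with the field $-a$, legitimate since the constants $m$, $c$ are unchanged) there is $\phi \in W_\infty^1(\R^n)$ solving $\phi - h\, a\cdot\nabla\phi = \psi$ — wait, more carefully, I want the adjoint equation, so I solve $\phi + h\,a\cdot\nabla\phi = \psi$ with the decay estimates \eqref{est}, and for $h$ small enough ($\alpha(1/h - m) > 1 + n/2$) both $\phi$ and $a\cdot\nabla\phi$ decay fast enough to be legitimate test objects against $w$. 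Pairing $w$ against $\phi$ and integrating by parts (justified because $w = \div u$ with $u \in L^2$, so $\langle w, \phi\rangle = -\int u\cdot\nabla\phi$, and $a\cdot\nabla\phi \in L^2$ by the estimates), the transport relation for $w$ converts $\langle w, \phi + h\,\div(a\phi)\rangle$-type expressions into $\langle w, \psi\rangle$; here I use that $\div(a\phi) = a\cdot\nabla\phi$ by solenoidality of $a$. Concluding $\langle w, \psi\rangle = 0$ for all $\psi \in C_0^1(\R^n)$ gives $w = 0$.

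**The main obstacle** I anticipate is not the algebra but the regularity bookkeeping: $u \in D(\tilde A) \subset L^2(\R^n,\R^n)$ need not have $\nabla u \in L^2$, so $w = \div u$ is only a distribution, and the commutator manipulations must be carried out weakly, testing everything against smooth compactly supported functions and only afterwards extending to the decaying solutions $\phi$ from Lemma~\ref{lm1} by the density/closedness arguments already used in the proof of Proposition~\ref{pro1} (approximating $\phi$ by $\phi\rho(x/k)$). I must also confirm that $Tu$, being defined via the Fourier multipliers in \eqref{ei2}, satisfies $\div(Tu) = \Delta p$ with exactly the right sign so that the cross terms cancel — this is really just unwinding \eqref{ei1}--\eqref{ei2}, but the sign discipline matters. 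Once $\div u = 0$ is established, $u \in H$ follows immediately since $H$ is the closed subspace of solenoidal fields, completing the proof.
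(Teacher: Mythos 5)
Your plan is essentially the paper's own proof: the paper also tests $u-h\tilde Au=v$ against $\nabla\phi$ with $\phi$ the decaying solution of $\phi+h\,a\cdot\nabla\phi=\psi$ from Lemma~\ref{lm1}, and the identity $(\tilde Au,\nabla\phi)=\int \div u\,(a\cdot\nabla\phi)\,dx$ it derives is exactly your cancellation of the commutator term $\sum_{j,k}(a_j)_{x_k}u^k_{x_j}$ against $\div(Tu)$, so the heart of the argument is correct and identical. The one bookkeeping point to get right is that the limiting procedure should be run in the $u$-slot, not the test-function slot: establish $(u-h\tilde Au,\nabla\phi)=(u,\nabla\psi)$ for $u$ in the core $D(\tilde A_0)=C_0^1(\R^n,\R^n)$ and extend by graph-norm density (both sides pair the $L^2$ functions $u-h\tilde Au$ and $u$ with the fixed $L^2$ fields $\nabla\phi$, $\nabla\psi$), whereas your stated route --- first obtaining the distributional equation $w-h\,a\cdot\nabla w=0$ for $w=\div u$ and then enlarging its class of test functions from $C_0^\infty$ to $\phi$ --- is awkward because $\langle w,a\cdot\nabla\phi\rangle$ would have to be interpreted via $-\int u\cdot\nabla(a\cdot\nabla\phi)$, and $a\cdot\nabla\phi$ is only $W^0_\infty\cap L^2$, not differentiable.
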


\begin{proof}
Let $\psi(x)\in C_0^1(\R^n)$, and $\phi(x)\in W_\infty^1(\R^n)$ be the function constructed in Lemma~\ref{lm1}. We chose $h>0$ so small that $\alpha(1/h-m)>1+n/2$. Then it follows from estimates (\ref{subl}), (\ref{est}) that $|\nabla\phi(x)|\in L^2(\R^n)$, $|a(x)||\nabla\phi(x)|\in L^2(\R^n)$. Multiplying the equality $u-h\tilde Au=v$ by the potential vector $\nabla\phi$, we obtain the equality
\[(u,\nabla\phi)-h(\tilde Au,\nabla\phi)=(v,\nabla\phi)=0.\]
Remark that, by Parseval's identity,
\begin{align*}
(Tu,\nabla\phi)=(\F(Tu),\F(\nabla\phi))=\\ -\int \sum_{j=1}^n\F\left(\sum_{k=1}^n(a_j)_{x_k}u^k\right)(\xi)
\sum_{l=1}^n\frac{\xi_l\xi_j}{|\xi|^2}\overline{i\xi_l\F(\phi)(\xi)}d\xi=\\
-\int_{\R^n}\sum_{j=1}^n\F\left(\sum_{k=1}^n(a_j)_{x_k}u^k\right)(\xi)\overline{i\xi_j\F(\phi)(\xi)}d\xi=
-\int_{\R^n}\sum_{j,k=1}^n(a_j)_{x_k}(x)u^k\phi_{x_j}(x)dx.
\end{align*}
Then, for $u\in D(\tilde A_0)=D(B_0)$
\begin{align*}
(Bu,\nabla\phi)=\int_{\R^n}\sum_{j,k=1}^n a_j(x)u^k_{x_j}(x)\phi_{x_k}(x)dx=\\
\int_{\R^n}\sum_{j,k=1}^n (a_j(x)u^k)_{x_j}(x)\phi_{x_k}(x)dx=\int_{\R^n}\sum_{j,k=1}^n (a_j(x)u^k)_{x_k}(x)\phi_{x_j}(x)dx.
\end{align*}
By the above relations we find the equality
\begin{align}\label{ei3}
(\tilde Au,\nabla\phi)=(Bu,\nabla\phi)+(Tu,\nabla\phi)=\nonumber\\
\int_{\R^n}\sum_{j,k=1}^n a_j(x)(u^k)_{x_k}(x)\phi_{x_j}(x)dx=
\int_{\R^n}\div u(x)\sum_{j=1}^n a_j(x)\phi_{x_j}(x)dx.
\end{align}
Thus, for all $u\in D(\tilde A_0)$
\begin{align*}
(u-h\tilde Au,\nabla\phi)=\int_{\R^n}u(x)\cdot\nabla\phi(x)dx-h(\tilde Au,\nabla\phi)=\\ -\int_{\R^n}\div u(x)(\phi+h\sum_{j=1}^n a_j(x)\phi_{x_j}(x))dx=\\ -\int_{\R^n}\div u(x)\psi(x)dx=\int_{\R^n}u(x)\cdot\nabla\psi(x)dx=(u,\nabla\psi).
\end{align*}
Since $D(\tilde A_0)$ is dense in $D(\tilde A)$ in the graph norm of $\tilde A$ then the relation
\[(u-h\tilde Au,\nabla\phi)=(u,\nabla\psi)\] can be extended by continuity to the case $u\in D(\tilde A)$. By the assumption $u-h\tilde Au=v\in H$ and therefore
$(u-h\tilde Au,\nabla\phi)=0$. Then $(u,\nabla\psi)=0$. But $\psi(x)\in C_0^1(\R^n)$ is arbitrary, and we conclude that $\div u(x)=0$ in $\D'(\R^n)$, that is, $u\in H$. The proof is complete.
\end{proof}

It follows from (\ref{ei3}) that for $u\in D(A_0)=D(B_0)$, $\phi\in C_0^2(\R^n)$
\begin{align}\label{ei4} (\tilde Au,\nabla\phi)=\int_{\R^n}\div u(x)\sum_{j=1}^n a_j(x)\phi_{x_j}(x)dx=\nonumber\\
-\int_{\R^n}u(x)\cdot\nabla\left(\sum_{j=1}^n a_j(x)\phi_{x_j}(x)\right)dx
\end{align}
and since $D(\tilde A_0)$ is dense in $D(\tilde A)$ in the graph norm of $\tilde A$ then equality (\ref{ei4}) remains fulfilled for $u\in D(\tilde A)$. If $u\in H\cap D(\tilde A)$ then $\div u(x)=0$ and it follows from (\ref{ei4}) that  $(\tilde Au,\nabla\phi)=0$ for all $\phi\in C_0^2(\R^n)$, that is, $\div\tilde Au=0$ in $\D'(\R^n)$. Hence, the subspace $H$ is invariant for $\tilde A$. By the construction, vectors $Tu$ are potential, this implies that $Tu\perp H$ and $PTu=0$. Therefore, $\tilde Au=PBu+PTu=PBu$ for $u\in D(\tilde A)\cap H$. This and skew-symmetricity of the operator $B$ easily imply that the restriction $\tilde A|_H$ is a skew-symmetric operator in $H$.
Since $\Im(E-h\tilde A)=L^2(\R^n,\R^n)$ for sufficiently small $h>0$ then for each $v\in H$ there exists such $u\in D(\tilde A)$ that $u-h\tilde Au=v$. By Lemma~\ref{lm2} we find that $u\in H\cap D(\tilde A)$. Hence,
$\Im (E-h\tilde A|_H)=H$. The similar statement holds after replacement $h$ by $-h$.
Thus, the skew-symmetric operator $\tilde A|_H$ has null deficiency indexes and therefore it is skew-adjoint.
Since $\tilde A_0|_H=A_0$ then $\tilde A|_H$ is a skew-adjoint extension of the operator $A$ because it is the closure of $A_0$. We have proven the following result.

\begin{proposition}\label{pro2}
The operator $A$ admits a skew-adjoint extension. In particular, deficiency indexes $d_+$ and $d_-$ of $A$ coincide.
\end{proposition}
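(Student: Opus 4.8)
The plan is to exhibit an explicit skew-adjoint extension of $A$, namely the restriction $\tilde A|_H$ of the operator $\tilde A = B + T$ to the subspace $H$, where $B$ is the closure of the componentwise transport operator $B_0$ (skew-adjoint by Proposition~\ref{pro1}) and $T$ is the bounded operator on $L^2(\R^n,\R^n)$ recovering $\nabla p$ through the Fourier formula (\ref{ei2}). All the analytic ingredients are already in place from the preceding discussion, so what remains is to assemble them. First I would record that $\tilde A = B + T$ is a bounded perturbation of the skew-adjoint operator $B$, so that $\Im(E \mp h\tilde A) = L^2(\R^n,\R^n)$ once $|h|$ is small enough, and that $D(\tilde A_0) = D(B_0) = C_0^1(\R^n,\R^n)$ is a core for $\tilde A$.

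Next I would verify that $H$ is an invariant subspace for $\tilde A$. Starting from (\ref{ei4}), which for $u \in D(\tilde A)$ reads $(\tilde A u, \nabla\phi) = \int_{\R^n} \div u(x) \sum_{j} a_j(x) \phi_{x_j}(x)\,dx$, one sees that if $u \in H \cap D(\tilde A)$, so $\div u = 0$, then $(\tilde A u, \nabla\phi) = 0$ for every $\phi \in C_0^2(\R^n)$, i.e. $\div \tilde A u = 0$ in $\D'(\R^n)$. Since the vectors $Tu$ are potential, $Tu \perp H$, whence $PTu = 0$ and $\tilde A u = PBu$ on $D(\tilde A) \cap H$; combined with skew-symmetricity of $B$ this shows that $\tilde A|_H$ is a skew-symmetric operator in $H$. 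Because $\tilde A_0|_H = A_0$ and $A$ is the closure of $A_0$, the operator $\tilde A|_H$ is an extension of $A$.

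The key remaining point — and the one I expect to be the main obstacle — is to promote surjectivity of $E \mp h\tilde A$ on the ambient space $L^2(\R^n,\R^n)$ to surjectivity of $E \mp h\tilde A|_H$ on $H$; this is exactly Lemma~\ref{lm2} together with its counterpart obtained by replacing $a(x)$ with $-a(x)$. Given $v \in H$, one picks $u \in D(\tilde A)$ with $u - h\tilde A u = v$; Lemma~\ref{lm2} — whose proof rests on pairing the equation against the \emph{unbounded} potential vector $\nabla\phi$, with $\phi$ the decaying solution of (\ref{tras}) furnished by Lemma~\ref{lm1}, which is why the smallness condition $\alpha(1/h - m) > 1 + n/2$ must be imposed — yields $\div u = 0$, i.e. $u \in H$. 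Hence $\Im(E - h\tilde A|_H) = H$, and likewise $\Im(E + h\tilde A|_H) = H$. Thus the skew-symmetric operator $\tilde A|_H$ has both deficiency indices equal to zero, so it is skew-adjoint, and it is the sought skew-adjoint extension of $A$.

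Finally, for the last assertion I would invoke the standard description via the Cayley transform already recalled in Section~3: a densely defined skew-symmetric operator admits a skew-adjoint extension if and only if its deficiency indices coincide, since such an extension corresponds to an isometric extension of the Cayley transform $Q$ that maps all of $(H_-)^\perp$ onto all of $(H_+)^\perp$, which requires $\dim (H_-)^\perp = \dim (H_+)^\perp$, that is $d_- = d_+$. Having just produced such an extension, we conclude $d_+(A) = d_-(A)$.
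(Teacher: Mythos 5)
Your proposal is correct and follows essentially the same route as the paper: the paper's own argument (the discussion surrounding Lemma~\ref{lm2} culminating in ``We have proven the following result'') constructs exactly the extension $\tilde A|_H$ with $\tilde A=B+T$, uses (\ref{ei4}) and $PTu=0$ to get skew-symmetry and invariance of $H$, and uses Lemma~\ref{lm2} (and its counterpart with $a$ replaced by $-a$) to conclude that both deficiency indices of $\tilde A|_H$ vanish. Your closing paragraph merely makes explicit the standard Cayley-transform reason why existence of a skew-adjoint extension forces $d_+=d_-$, which the paper leaves implicit.
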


To prove skew-adjointness of the operator $A$ itself, we need a variant of the famous DiPerna-Lions commutation lemma
\cite[Lemma~II.1]{DiL}. Suppose $\rho(z)\in C_0^1(\R^n)$, $\supp\rho\subset B_1(0)=\{z\in\R^n | |z|\le 1\}$, $\rho(z)\ge 0$, $\int_{\R^n}\rho(z)dz=1$. For $u(x)\in L^1_{loc}(\R^n)$ we introduce the corresponding averaged functions
\[u_k(x)=u*\rho_k(x)=\int_{\R^n}\rho_k(x-y)u(y)dy,\]
where $k\in\N$, $\rho_k(z)=k^n\rho(kz)$.

\begin{lemma}\label{lmDL}
Let $a(x)\in C(\R^n)$ satisfy the global Lipschitz condition (\ref{lip}), that is
$|a(x)-a(y)|\le m|x-y|$ for all $x,y\in\R^n$; $u(x)\in L^p(\R^n)$, $1\le p<\infty$. Then
\begin{equation}\label{DL}
\frac{\partial}{\partial x_j}(au_k-(au)_k)(x)\to 0 \ \mbox{ in } L^p(\R^n)
\end{equation}
for all $j=1,\ldots,n$.
\end{lemma}

\begin{proof}
Since \[(au_k-(au)_k)(x)=k^n\int_{\R^n}(a(x)-a(y))\rho(k(x-y)) u(y)dy,\]
there exists the generalized derivatives
\begin{align}\label{rel}
\frac{\partial}{\partial x_j}(au_k-(au)_k)(x)=k^n\int_{\R^n}a_{x_j}(x)\rho(k(x-y)) u(y)dy+\nonumber\\ k^{n+1}\int_{\R^n}(a(x)-a(y))\rho_{z_j}(k(x-y)) u(y)dy=I_{1k}(x)+I_{2k}(x).
\end{align}
The first term in this sum
\begin{equation}\label{rel1}
I_{1k}(x)=a_{x_j}(x)\int_{\R^n}\rho_k(x-y) u(y)dy=a_{x_j}(x)u_k(x)\to a_{x_j}(x)u(x)
\end{equation}
as $k\to\infty$ in $L^p(\R^n)$ because $u_k\mathop{\to}\limits_{k\to\infty} u$ in $L^p(\R^n)$ by the known property of averaged functions while the derivative $a_{x_j}(x)\in L^\infty(\R^n)$ in view of the Lipschitz condition.
Next, we estimate the term $I_{2k}(x)$. Obviously,
\begin{align}\label{es1}
|I_{2k}(x)|\le k^{n+1}\int_{\R^n}|a(x)-a(y)||\rho_{z_j}(k(x-y))| |u(y)|dy\le \nonumber\\ \omega_k(x)\doteq mk^n\int_{\R^n}k|x-y||\rho_{z_j}(k(x-y))| |u(y)|dy.
\end{align}
By the properties of averaged functions $\omega_k(x)\in L^p(\R^n)$ and
\begin{equation}\label{rel2}
\omega_k(x)\to C|u(x)|
\end{equation}
as $k\to\infty$ both in $L^p(\R^n)$ and a.e. in $\R^n$. Here $\displaystyle C=m\int_{\R^n} |z||\rho_{z_j}(z)|dz=\const$.
Now, let $x$ be a common Lebesgue point of the vector $\nabla a(y)$ and the function $u(y)$. Then
\begin{align}\label{rel2-1}
I_{2k}(x)=k^{n+1}\int_{\R^n}(a(x)-a(y))\rho_{z_j}(k(x-y))(u(y)-u(x))dy+\nonumber\\ u(x)k^{n+1}\int_{\R^n}(a(x)-a(y))\rho_{z_j}(k(x-y))dy.
\end{align}
The first term in the right-hand part of (\ref{rel2-1}) is estimated as
\begin{align}\label{est2-1}
k^{n+1}\left|\int_{\R^n}(a(x)-a(y))\rho_{z_j}(k(x-y))(u(y)-u(x))dy\right|\le \nonumber\\
mk^n\int_{\R^n}k|x-y||\rho_{z_j}(k(x-y))||u(y)-u(x)|dy\mathop{\to}_{k\to\infty} 0
\end{align}
since $x$ is a Lebesgue point of $u(y)$. We introduce the function ${\displaystyle J_k(x)=k^{n+1}\int_{\R^n}(a(x)-a(y))\rho_{z_j}(k(x-y))dy}$ and represent it in the form
\begin{align}\label{rel2-2}
J_k(x)=k^{n+1}\int_{\R^n}(a(x)-a(y)-\nabla a(x)\cdot(x-y))\rho_{z_j}(k(x-y))dy+\nonumber\\ k^{n+1}\int_{\R^n}\nabla a(x)\cdot (x-y)\rho_{z_j}(k(x-y))dy=\nonumber\\
k^{n+1}\int_{\R^n}\int_0^1 (\nabla a(x+s(y-x))-\nabla a(x))\cdot(x-y)\rho_{z_j}(k(x-y))dsdy+\nonumber\\ \nabla a(x)\cdot\int_{\R^n}z\rho_{z_j}(z)dz.
\end{align}
We utilize here the identity
\[
a(x)-a(y)=\int_0^1\nabla a(x+s(y-x))\cdot(x-y)ds,
\]
which holds for a.e. $y\in\R^n$.
To estimate the first term in the right-hand part of (\ref{rel2-2}), we make the change of variables $z=s(x-y)$, resulting in
\begin{align}\label{est2-2}
k^{n+1}\left|\int_{\R^n}\int_0^1 (\nabla a(x+s(y-x))-\nabla a(x))\cdot(x-y)\rho_{z_j}(k(x-y))dsdy\right|\le\nonumber\\
k^n\int_{\R^n} |\nabla a(x-z)-\nabla a(x)|\rho_1(kz)dz,
\end{align}
where we denote
\[
\rho_1(y)=|y|\int_0^1s^{-n-1}|\rho_{z_j}(y/s)|ds.
\]
Remark that $\rho_1\ge 0$, $\supp\rho_1\subset B_1(0)$, $\displaystyle\int_{\R^n}\rho_1(y)dy=\int_{\R^n}|z||\rho_{z_j}(z)|dz$, and since $x$ is a Lebesgue point of the vector $\nabla a(y)$, we find
\[k^n\int_{\R^n} |\nabla a(x-z)-\nabla a(x)|\rho_1(kz)dz\mathop{\to}_{k\to\infty} 0\].
It follows from (\ref{rel2-2}) and (\ref{est2-2}) that
\begin{equation} \label{rel3}
J_k(x)\mathop{\to}_{k\to\infty} \nabla a(x)\cdot\int_{\R^n}z\rho_{z_j}(z)dz=-a_{x_j}(x).
\end{equation}
Here, we apply the integration by parts formula
\[\int_{\R^n}z\rho_{z_j}(z)dz=-\int_{\R^n}\frac{\partial z}{\partial z_j}\rho(z)dz=-e_j, \]
where $e_j$ is the $j$th basis vector in $\R^n$.
In view of (\ref{rel2-1}), (\ref{est2-1}) it follows from (\ref{rel3}) that $I_{2k}(x)\to -a_{x_j}(x)u(x)$ as $k\to\infty$ a.e. in $\R^n$. Further, we notice that
\[
|I_{2k}(x)+a_{x_j}(x)u(x)|^p\le 2^{p-1}(|I_{2k}(x)|^p+|a_{x_j}(x)u(x)|^p)\le 2^{p-1}((\omega_k(x))^p+|a_{x_j}(x)u(x)|^p).\]
The left-hand side of this inequality converges to zero a.e. in $\R^n$, while its right-hand side converges both in  $L^1(\R^n)$ and a.e. in $\R^n$, by relation (\ref{rel2}). Applying Fatou's lemma to the sequences
\[
2^{p-1}((\omega_k(x))^p+|a_{x_j}(x)u(x)|^p)\pm |I_{2k}(x)+a_{x_j}(x)u(x)|^p,
\]
we derive that
\[
\lim_{k\to\infty} \int_{\R^n}|I_{2k}(x)+a_{x_j}(x)u(x)|^pdx=0,
\]
that is, $\displaystyle I_{2k}(x)\mathop{\to}_{k\to\infty} -a_{x_j}(x)u(x)$ in $L^p(\R^n)$. This, together with  (\ref{rel}), (\ref{rel1}), gives the required relation (\ref{DL}). The proof is complete.
\end{proof}

Now we are ready to prove that the operator $A$ is skew-adjoint provided ${a(x)\in W_\infty^1(\R^n,\R^n)}$ (that is, in addition to the requirements of Proposition~\ref{pro2} we assume that the coefficients $a(x)$ are bounded).

\begin{proposition}\label{pro3}
Let $a_j(x)\in W_\infty^1(\R^n)$ for all $j=1,\ldots,n$. Then the operator $A$ is skew-adjoint.
\end{proposition}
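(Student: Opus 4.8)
The plan is to deduce skew-adjointness of $A$ from the single statement $d_-(A)=0$, since Proposition~\ref{pro2} then yields $d_+(A)=d_-(A)=0$. As $A$ is closed and skew-symmetric, $\|(E-A)u\|\ge\|u\|$ for $u\in D(A)$, so $\Im(E-A)$ is closed and $d_-(A)=0$ is equivalent to $\ker(E-A^*)=\{0\}$. I would first unravel this kernel. If $w\in D(A^*)\subset H$ and $A^*w=w$, then $(w,A_0u)=(w,u)$ for all $u\in D(A_0)=C_0^1(\R^n,\R^n)\cap H$; writing $A_0u=Pv$ with $v=\sum_ja_ju_{x_j}$, using $Pw=w$ and integrating by parts with $\div a=0$, this becomes $\langle w+(a\cdot\nabla)w,u\rangle=0$ for all solenoidal $u\in C_0^1$, where $(a\cdot\nabla)w=\sum_j\partial_j(a_jw)\in H^{-1}$. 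By the de Rham theorem on $\R^n$ this means $w+(a\cdot\nabla)w=\nabla r$ for some $r\in\D'(\R^n)$, and the problem reduces to showing that a solenoidal $w\in L^2(\R^n,\R^n)$ satisfying $w+(a\cdot\nabla)w=\nabla r$ must vanish.

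The key step, and the only one that uses the stronger hypothesis $a_j\in W_\infty^2$, is to prove $\nabla r\in L^2(\R^n)$. Taking the divergence of the identity and using $\div w=0$ gives $\Delta r=\div((a\cdot\nabla)w)$; expanding by the Leibniz rule (legitimate since $a\in C^1$) and again using $\div a=\div w=0$, I would rewrite this as $\Delta r=\div G-g$ with $G_j=\sum_i(\partial_ia_j)w^i\in L^2$ and $g=\sum_{i,j}(\partial_i\partial_ja_j)w^i$, and here $g\in L^2$ precisely because $\nabla^2a\in L^\infty$. For $|\xi|\ge1$ this elliptic identity and the $L^2$-boundedness of the relevant multipliers (Calderón–Zygmund type for the $G$-part, and $\xi_k/|\xi|^2$, bounded on $\{|\xi|\ge1\}$, for the $g$-part) give $\widehat{\nabla r}\in L^2(\{|\xi|\ge1\})$; for $|\xi|\le1$ I would instead use $\widehat{\nabla r}=\hat w+\widehat{(a\cdot\nabla)w}$ with $\widehat{(a\cdot\nabla)w}(\xi)=i\sum_j\xi_j\widehat{a_jw}(\xi)$, which on $\{|\xi|\le1\}$ is dominated by $\sum_j|\widehat{a_jw}|\in L^2$, while $\hat w\in L^2$. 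Hence $\nabla r\in L^2(\R^n)$.

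With $\nabla r\in L^2$ the identity gives $(a\cdot\nabla)w=\nabla r-w\in L^2(\R^n)$, so $w$ lies in the domain of the closure $B$ of the componentwise transport operator, which is skew-adjoint by Proposition~\ref{pro1}; since also $\div w=0$, $w\in D(B)\cap H=D(\tilde A|_H)$, where $\tilde A|_H$ is the skew-adjoint extension of $A$ from Proposition~\ref{pro2} and the construction preceding it. Applying the Leray projector $P$ to $w+(a\cdot\nabla)w=\nabla r$ and using $P\nabla r=0$ (legitimate now that $\nabla r\in L^2$) together with $P((a\cdot\nabla)w)=PBw=\tilde A|_Hw$, we obtain $\tilde A|_Hw=-w$, whence $0=(\tilde A|_Hw,w)=-\|w\|^2$ and $w=0$. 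Thus $\ker(E-A^*)=\{0\}$, $d_-(A)=0$, and by Proposition~\ref{pro2} also $d_+(A)=0$: $A$ is skew-adjoint.

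The main obstacle is the global $L^2$ bound on the pressure gradient $\nabla r$ in the second paragraph: its high-frequency part requires a Calderón–Zygmund estimate that only closes when $\nabla^2a\in L^\infty$ — this is what fails for merely Lipschitz $a$, and explains why Proposition~\ref{pro2} produces only a skew-adjoint extension — whereas its low-frequency part is harmless because $(a\cdot\nabla)w$ is a spatial derivative of an $L^2$ function. The routine points I would treat only briefly are the precise de Rham statement on $\R^n$, the mollification argument making the distributional Leibniz manipulations rigorous, and the elementary identity $D(B)=\{w\in L^2:(a\cdot\nabla)w\in L^2\}$ coming from skew-adjointness of $B$.
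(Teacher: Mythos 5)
Your proof is correct in outline, but it takes a genuinely different route from the paper's. The paper proves surjectivity of $E+hA$ directly: it shows via the characteristics representation that $(E+hB)^{-1}$ is bounded on $W_2^1(\R^n,\R^n)$ with norm $\le(1-mh)^{-1}$, uses $a_j\in W_\infty^2$ to make the pressure operator $T$ bounded on $W_2^1$, concludes that $(E+h\tilde A)^{-1}$ preserves $W_2^1\cap H$, and finally shows $W_2^1\cap H\subset D(A)$ by the Ladyzhenskaya--Solonnikov density of smooth solenoidal fields, so that $\Im(E+hA)\supset W_2^1\cap H$ is dense and closed. You instead kill the deficiency space $\ker(E-A^*)$ head-on: unravel the adjoint relation, invoke de Rham to produce the pressure gradient, prove $\nabla r\in L^2$ by a Fourier multiplier argument, and then use the characterization $D(B)=D(B^*)=\{w:(a\cdot\nabla)w\in L^2\}$ (which is a legitimate dividend of Proposition~\ref{pro1}) to conclude $\|w\|^2=(w+Bw,w)-(\nabla r,w)=0$. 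This avoids all $W_2^1$-regularity of the resolvent and the density theorem for solenoidal fields, at the price of the de Rham/mollification/harmonic-ambiguity technicalities you flag as routine (the harmonic part of $r$ is excluded because $\nabla r=w+\sum_j\partial_j(a_jw)\in H^{-1}$, which uses boundedness of $a$).

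One substantive correction to your own commentary: the term you identify as the crux, $g=\sum_{i,j}(\partial_i\partial_ja_j)w^i$, vanishes identically, since $\sum_j\partial_i\partial_ja_j=\partial_i(\div a)=0$ by solenoidality; equivalently, computing $\sum_k\partial_k(a_jw^k)=\sum_k(\partial_ka_j)w^k+a_j\div w=G_j$ first gives $\Delta r=\div G$ outright, with $G\in L^2$ requiring only $\nabla a\in L^\infty$. So the Calder\'on--Zygmund step closes for merely Lipschitz $a$, and the only place your argument uses anything beyond Lipschitz is the low-frequency bound $a_jw\in L^2$, which needs $a\in L^\infty$. Your proof therefore actually establishes skew-adjointness of $A$ under $a_j\in W_\infty^1(\R^n)$ (bounded Lipschitz), which is strictly weaker than the paper's hypothesis $a_j\in W_\infty^2(\R^n)$; the second derivatives are genuinely needed only for the paper's method (boundedness of $T$ on $W_2^1$), not for yours. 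This is not a gap --- it means your identification of the ``main obstacle'' is misplaced and the result you prove is slightly stronger than the one stated.
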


\begin{proof}
Let us firstly show that $W_2^1(\R^n,\R^n)\cap H\subset D(A)$.
It is known, see for instance \cite[Lemma 3.1]{LadSol}, that the space $D(A_0)=C_0^1(\R^n,\R^n)\cap H$ is dense in  $W_2^1(\R^n,\R^n)\cap H$ (the proof is given for physical case $n=2,3$ but can be easily extended for arbitrary dimensions). Therefore, we can find a sequence $u_r\in D(A_0)$, $r\in\N$ such that the sequences $u_r$, $(u_r)_{x_j}$, $j=1,\ldots,n$, converge as $r\to\infty$ to $u(x)$ and, respectively, to $u_{x_j}$ in $L^2(\R^n,\R^n)$. By our assumption $a_j(x)\in L^\infty(\R^n)$, and we conclude that \[A(u_m)=\sum_{j=1}^n a_j(x)(u_m)_{x_j}(x)+Tu_m\to Bu+Tu=\tilde Au\] in $L^2(\R^n,\R^n)$. Since the operator $A$ is closed, we find that $u\in D(A)$ and that $Au=\tilde Au$.

Now, we assume that $u\in D(\tilde A)\cap H=D(B)\cap H$ and introduce the sequence of averaged vectors \[u_k(x)=k^n\int_{\R^n}\rho(ky)u(x-y)dy=k^n\int_{\R^n}\rho(k(x-y))u(y)dy.\]
It is clear that $u_k(x)\in W_2^1(\R^n,\R^n)$ and that $\displaystyle\div u_k(x)=\rho_k*\div u=0$. Hence, $u_k\in  W_2^1(\R^n,\R^n)\cap H\subset D(A)$.
In the limit as $k\to\infty$ the following relations hold.
\begin{equation}\label{cl1}
u_k\to u, \quad (Bu)_k=k^n\int_{\R^n}\rho(k(x-y))Bu(y)dy\to (Bu)(x) \ \mbox{ in } L^2(\R^n,\R^n).
\end{equation}
Notice that by solenoidality of the coefficients $a(x)$ and by commutativity of the averaging and differentiation operators
\[ Bu_k(x)=\sum_{j=1}^n (a_j(x)u_k(x))_{x_j}, \quad (Bu)_k(x)=\sum_{j=1}^n ((a_ju)_k)_{x_j}(x).\]
Then, by Lemma~\ref{lmDL} (with $p=2$)
\[Bu_k-(Bu)_k=\sum_{j=1}^n (a_j(x)u_k(x)-(a_ju)_k(x))_{x_j}\to 0 \ \mbox{ in } L^2(\R^n,\R^n)\]
and the second relation in (\ref{cl1}) can be written in the form
$Bu_k\to Bu$ in $L^2(\R^n,\R^n)$. Since the operator $T$ in $L^2(\R^n,\R^n)$ is bounded, we obtain the relation
\[
u_k\mathop{\to}_{k\to\infty} u, \quad Au_k=Bu_k+Tu_k\mathop{\to}_{k\to\infty} Bu+Tu=\tilde Au \ \mbox{ in } H.\]
Using again that the operator $A$ is closed, we conclude $u\in D(A)$ and $Au=\tilde Au$. So, we established that
$D(\tilde A)\cap H\subset D(A)$. We remind that the operator $\tilde A|_H$ is a skew-adjoint extension of $A$ and in particular $D(A)\subset D(\tilde A)\cap H$. Hence, $D(\tilde A)\cap H=D(A)$ and the operator $A=\tilde A|_H$ is skew-adjoint, as was to be proved.
\end{proof}

\end{document}